\title{Analysis of Regular Sequences: Summatory Functions
  and Divide-and-Conquer Recurrences}
\titlerunning{Analysis of Regular Sequences} 
\author{Clemens Heuberger\footnote{Corresponding author}}%
{Department of Mathematics, University of Klagenfurt, Austria \and
  \url{https://wwwu.aau.at/cheuberg}}%
{clemens.heuberger@aau.at}%
{https://orcid.org/0000-0003-0082-7334}%
{This research was funded in part by the Austrian Science Fund
  (FWF) [10.55776/DOC78]. For open access purposes, the author has applied a CC
  BY public copyright license to any author-accepted manuscript version arising
  from this submission.}
\author{Daniel Krenn}{Fachbereich Mathematik, Paris Lodron University of
  Salzburg, Austria \and \url{https://www.danielkrenn.at}}%
{math@danielkrenn.at}%
{https://orcid.org/0000-0001-8076-8535}%
{}
\author{Tobias Lechner}%
{Department of Mathematics, University of Klagenfurt, Austria}%
{toblechner@edu.aau.at}%
{}%
{}
\authorrunning{C. Heuberger, D. Krenn,
  and T. Lechner} 
\keywords{Regular sequence, Divide-and-Conquer Recurrence, Summatory Function, Asymptotic Analysis} 
\DeclarePairedDelimiter{\abs}{\lvert}{\rvert}
\newcommand{\C}{\mathbb{C}}
\newcommand{\calG}{\mathcal{G}}
\DeclarePairedDelimiter{\ceiling}{\lceil}{\rceil}
\DeclarePairedDelimiter{\fractional}{\{}{\}}
\DeclarePairedDelimiter{\iverson}{\llbracket}{\rrbracket}
\DeclarePairedDelimiter{\floor}{\lfloor}{\rfloor}
\newcommand{\N}{\mathbb{N}}
\DeclarePairedDelimiter{\norm}{\lVert}{\rVert}
\DeclarePairedDelimiter{\set}{\{}{\}}
\DeclarePairedDelimiterX{\setm}[2]{\{}{\}}{#1\,\delimsize\vert\,\mathopen{}#2}
\newcommand{\Z}{\mathbb{Z}}
\begin{document}

\maketitle

\begin{abstract}
  In the asymptotic analysis of regular sequences as defined by Allouche and
  Shallit, it is usually advisable to study their summatory function because
  the original sequence has a too fluctuating behaviour. It might be that the
  process of taking the summatory function has to be repeated if the sequence
  is fluctuating too much. In this paper we show that for all regular
  sequences except for some degenerate cases, repeating this process finitely many times leads to a ``nice''
  asymptotic expansion containing periodic fluctuations whose Fourier
  coefficients can be computed using the results on the
  asymptotics of the summatory function of regular sequences by the first two
  authors of this paper.

  In a recent paper, Hwang, Janson, and Tsai perform a thorough investigation
  of divide-and-conquer recurrences. These can be seen as $2$-regular
  sequences. By considering them as the summatory function of their forward
  difference, the results on the
  asymptotics of the summatory function of regular sequences become
  applicable. We thoroughly investigate the case of a polynomial toll function.
\end{abstract}

\section{Introduction}
\subsection{Overview}\label{section:problem-statement}

The aim of~\cite{Heuberger-Krenn:2018:asy-regular-sequences} is the study of the asymptotic
behaviour of the summatory functions of regular
sequences~\cite{Allouche-Shallit:1992:regular-sequences}---in simplest terms, a
sequence $x$ is called $q$-regular for some integer $q\ge 2$
if there are square matrices $A_0$, \ldots, $A_{q-1}$, a row vector
$u$ and a column vector $w$ such that for all integers $n\ge 0$,
\begin{equation}\label{eq:introduction-matrix-product}
  x(n) = u A_{n_0}\ldots A_{n_{\ell-1}}w
\end{equation}
where $(n_{\ell-1}, \ldots, n_0)$ is the $q$ary expansion of $n$; an
alternative definition will be given in
Definition~\ref{definition:regular-sequence}. Regular sequences have been
introduced by Allouche and
Shallit~\cite{Allouche-Shallit:1992:regular-sequences}; a plethora of examples
have also been given in the same publication. We highlight two prototypical
examples at this point: the binary sum of digits function and the worst case
number of comparisons in merge sort.

The main result of~\cite{Heuberger-Krenn:2018:asy-regular-sequences} is that
the summatory function $N\mapsto \sum_{0\leq n < N} x(n)$ of a
$q$-regular sequence $x$ has an asymptotic expansion
\begin{equation}\label{eq:asymptotics-of-summatory-function-simple}
  \smashoperator{\sum_{0\leq n < N}} x(n) = \smashoperator{\sum_{\substack{\lambda\in\sigma(C)\\ \abs{\lambda} > R}}} N^{\log_{q}\lambda}
  \quad\smashoperator{\sum_{0\leq k < m_{C}(\lambda)}}\quad\frac{(\log N)^{k}}{k!}\ \Phi_{\lambda k}(\log_{q}N)
  + O\bigl(N^{\log_{q}R}(\log N)^\kappa\bigr)
\end{equation}
as $N\to\infty$, where the $\Phi_{\lambda k}$ are suitable $1$-periodic
continuous functions
and $\sigma(C)$, $m_C$, $R$, $\kappa$ are a set, a function, and two quantities,
respectively, depending on the regular sequence and which will be explained in
Theorem~\ref{thm:asymp} below. An algorithm is given to compute the Fourier
coefficients of the periodic functions. The main question is
whether there are $\lambda\in\sigma(C)$ with
$\abs{\lambda}>R$. In this case, we say that we established a \emph{good}
asymptotic expansion for the summatory function of
the regular
sequence. Otherwise,~\eqref{eq:asymptotics-of-summatory-function-simple}
reduces to an error term. Note that discussing the question of whether the
periodic fluctuations vanish is beyond the scope of this paper.

Studying the summatory function was motivated by the fact that in several
well-known examples of regular sequences, the sequences themselves are
fluctuating too much so that it is impossible to establish a good asymptotic expansion for
the regular sequence itself. For instance, for the binary sum of digits
function $s_2$, we have $s_2(2^k-1)=k$ and $s_2(2^k)=1$ for all integers $k\ge 0$, so the most precise asymptotic
expansion for $s_2(n)$ is $s_2(n)=O(\log n)$ for $n\to\infty$. However, the summatory
function might admit a good asymptotic expansion: For the summatory function of
the binary sum of digits function,
we have $\sum_{0\le n<N} s_2(n)=\frac12 N\log_2 N +
N\Phi(\log_2 N)$ for some 1-periodic continuous function $\Phi$ as $N\to\infty$; see
Delange~\cite{Delange:1975:chiffres}. In this particular example, there is no
error term; in general, an error term is to be expected.

However, \emph{a priori}, it is not clear whether the summatory function of a
regular sequence will be smooth enough so that a good asymptotic expansion can
be established. In fact, it is known~\cite[Theorems~2.6
and~2.5]{Allouche-Shallit:1992:regular-sequences} that the forward difference
$n\mapsto x(n+1)-x(n)$ of a regular sequence $x$ is again regular. This means
that the summatory function of the forward difference of the binary sum of
digits function equals the binary sum of digits function and no good asymptotic
expansion can be obtained. Thus,
as we are able to go forth (summatory function) and back (forward difference),
the question arises whether for
every regular sequence, there is a non-negative integer $k$ such that its
$k$-fold summatory function admits a good asymptotic expansion. In this
paper, we prove that this is the case for all regular sequences except for some
degenerate cases
(Theorem~\ref{theorem:k-fold-summatory-function}).

For other regular sequences, the sequence itself might admit a good
asymptotic expansion. One example are sequences associated with
divide-and-conquer schemes~\cite{Hwang-Janson-Tsai:2017:divide-conquer-half,
  Hwang-Janson-Tsai:ta:ident}, for example, the worst case analysis of the number of
comparisons in the merge sort algorithm. These are closely related to the
so-called ``master theorems''; see the discussion in~\cite{Hwang-Janson-Tsai:ta:ident}.
These sequences are easily
seen~\cite[Equation~(2.1)]{Hwang-Janson-Tsai:ta:ident} to be regular sequences
(as long as the toll function is regular). While Hwang, Janson, and Tsai~\cite{Hwang-Janson-Tsai:ta:ident}
provide a direct proof for the asymptotic behaviour and give plenty of
examples, the question is whether these results can also be obtained by using
the results in~\cite{Heuberger-Krenn:2018:asy-regular-sequences}. In the present
paper, we see such a sequence as the summatory function of its forward
difference, and we show that for polynomial toll functions, we get a good
asymptotic expansion in the vast majority of cases. The result is formulated in
Theorems~\ref{theorem:divide-conquer-polynomial-toll-function}
and~\ref{theorem:constant-toll-function}. In contrast
to~\cite{Hwang-Janson-Tsai:ta:ident}, we are not constrained to cases where the
toll function is asymptotically smaller than the sequence and Fourier
coefficients can be computed using the results
of~\cite{Heuberger-Krenn:2018:asy-regular-sequences}.

The remaining paper is structured as follows. In
Section~\ref{section:introduction-regular-sequences}, we recall the definition
and the relevant results on regular sequences. This is followed in
Section~\ref{section:introduction-summatory-functions} by the statement of our
new result on the $k$-fold summatory function. In
Section~\ref{section:introduction-divide-and-conquer-state-of-the-art}, we
present the state of the art for divide-and-conquer sequences and state our
version the result in
Section~\ref{section:introduction-divide-and-conquer-polynomial-toll-function}.
An explicit example is discussed in Section~\ref{section:example}.
Sections~\ref{section:proof-summatory-function}
and~\ref{section:proof-divide-conquer-polynomial-toll-function} are devoted to
the proofs of our theorems.

\subsection{Regular Sequences: Definition and State of the Art}\label{section:introduction-regular-sequences}
We recall the definition%
\footnote{Strictly speaking, this is an algorithmic characterisation of a
  regular sequence which is equivalent to the definition given by
  Allouche and Shallit~\cite{Allouche-Shallit:1992:regular-sequences},
  who first introduced this concept: they define a sequence~$x$ to be
  $q$-regular if the kernel
  \begin{equation*}
    \setm[\big]{x\circ (n\mapsto q^{j}n + r)}{
      \text{$j$, $r \in \N_0$ with $0\leq r < q^{j}$}}
  \end{equation*}
is contained in a finitely generated module.}
of a \emph{regular sequence}; see Allouche and
Shallit~\cite{Allouche-Shallit:1992:regular-sequences,
  Allouche-Shallit:2003:autom} for characterisations, properties,
and an abundance of examples.

\begin{definition}\label{definition:regular-sequence}
  Let $q\ge 2$ be an integer.
  A sequence $x\in \C^{\N_0}$ is said to be \emph{$q$-regular}%
  \footnote{In the standard
  literature, the basis is frequently denoted by
  $k$ instead of our $q$ here.}
  if there are a
  non-negative integer~$D$, a family $A=(A_r)_{0\le r<q}$
  of $D\times D$ matrices over~$\C$,
  a vector $u\in \C^{1\times D}$ and
  a vector-valued sequence $v\in (\C^{D\times 1})^{\N_0}$
  such that for all $n\in\N_0$, we have
  \begin{equation*}
    x(n)=uv(n),
  \end{equation*}
  and
  such that for all $0\le r<q$ and all $n\in\N_0$, we have
  \begin{equation}\label{equation:regular-sequence}
    v(qn+r)=A_rv(n).
  \end{equation}

  We call $(u, A, v(0))$ a \emph{linear representation} of~$x$ and~$v$
  the \emph{right vector-valued sequence} associated with this linear representation.
\end{definition}

Note that~\eqref{eq:introduction-matrix-product} easily follows
from~\eqref{equation:regular-sequence} by induction; the other direction is
contained in \cite[Lemma~4.1]{Allouche-Shallit:1992:regular-sequences}.

In~\cite{Heuberger-Krenn:2018:asy-regular-sequences} asymptotic
properties were studied. To formulate an abbreviated version of its main result, we first need to
recall the notion of the joint spectral radius of a set of square
matrices as bounds on matrix products are relevant in view of the
representation~\eqref{eq:introduction-matrix-product}. We fix a
vector norm~$\norm{\,\cdot\,}$ on $\C^{D}$ and consider its induced matrix
norm.

\begin{definition}Let $D$ be a positive integer and $\calG$ be a finite set of $D\times
  D$ matrices over $\C$.

  \begin{enumerate}
  \item The joint spectral radius of~$\calG$ is defined as
    \begin{equation*}
      \rho(\calG)\coloneqq \lim_{k\to\infty} \sup\setm{\norm{G_{1}\ldots
        G_{k}}^{1/k}}{G_1,\ldots, G_k\in\calG}.
    \end{equation*}
  \item We say that $\calG$ has the \emph{simple growth property} if
    \begin{equation*}
      \norm{G_1\ldots G_k}=O(\rho(\calG)^k)
    \end{equation*}
    holds for all $G_1$, \ldots, $G_k\in\calG$ and $k\to\infty$.
  \end{enumerate}

  For a family $G=(G_i)_{i\in I}$ of $D\times D$ matrices, we set
  $\rho(G)\coloneqq \rho(\setm{G_i}{i\in I})$ and we say that $G$ has the
  simple growth property if $\setm{G_i}{i\in I}$ has the simple growth property.
\end{definition}
We note that the joint spectral radius and the simple growth property are
independent of the chosen norm; cf.\ \cite[Remark~4.2]{Heuberger-Krenn-Lipnik:2021:asymp-analy-recur-sequen}.

For a square matrix~$M$, let $\sigma(M)$ denote the set of eigenvalues of~$M$
and by $m_{M}(\lambda)$ the size of the largest Jordan block of~$M$ associated
with some~$\lambda\in\C$. In particular, we have $m_{M}(\lambda) = 0$ if
$\lambda\notin\sigma(M)$. Finally, we let
$\fractional{z} \coloneqq z - \floor{z}$ denote the fractional part of a real
number~$z$. We use Iverson's convention: For a statement~$S$, we set
$\iverson{S} = 1$ if~$S$ is true and~$0$ otherwise; see also Graham, Knuth and
Patashnik~\cite[p.~24]{Graham-Knuth-Patashnik:1994}.

\begin{theorem}[{\cite[Theorem~A]{Heuberger-Krenn:2018:asy-regular-sequences}},
  {\cite{Dumas:2013:joint, Dumas:2014:asymp}}]
  \label{thm:asymp}
  Let~$x$ be a $q$-regular sequence with linear representation
  $(u, A, w)$, and set
  \begin{align*}
    B_r&\coloneqq \sum_{0\le s<r}A_s,&
    C&\coloneqq \smashoperator{\sum_{0\leq s< q}}A_{s}
  \end{align*}
  for $0\le r<q$.

  We choose $R > 0$ as follows: If
  $A$ has the simple growth property,
  then we set $R=\rho(A)$. Otherwise, we choose
  $R > \rho(A)$ such that there is no
  eigenvalue~$\lambda\in\sigma(C)$ with $\rho(A) < \abs{\lambda}\leq
  R$.

  Then we have
  \begin{multline}\label{eq:main-asymptotic-expansion}
    \smashoperator{\sum_{0\leq n < N}} x(n) = \smashoperator{\sum_{\substack{\lambda\in\sigma(C)\\ \abs{\lambda} > R}}} N^{\log_{q}\lambda}
    \quad\smashoperator{\sum_{0\leq k < m_{C}(\lambda)}}\quad\frac{(\log N)^{k}}{k!}\ \Phi_{\lambda k}(\fractional{\log_{q}N})\\
    + O\bigl(N^{\log_{q}R}(\log N)^{\max\set{m_{C}(\lambda)\colon \abs{\lambda} = R}}\bigr)
  \end{multline}
  as $N\to\infty$, where $\Phi_{\lambda k}$ are suitable $1$-periodic functions. If
  there are no eigenvalues $\lambda\in\sigma(C)$ with
  $\abs{\lambda} \le R$, the $O$-term can be omitted.

  For $\abs{\lambda} > R$ and $0\leq k < m_{C}(\lambda)$, the
  function~$\Phi_{\lambda k}$ is Hölder continuous with any exponent
  smaller than~$\log_{q}(\abs{\lambda}/R)$.
\end{theorem}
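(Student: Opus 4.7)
The plan is to establish Theorem~\ref{thm:asymp} via Mellin--Perron summation applied to a vector-valued Dirichlet series associated with the linear representation. Introduce $\mathcal{V}(s) \coloneqq \sum_{n\ge 1} v(n) n^{-s}$, where $v$ is the right vector-valued sequence; then $\sum_{n\ge 1} x(n) n^{-s} = u\,\mathcal{V}(s)$. The matrix-product representation~\eqref{eq:introduction-matrix-product} together with any bound on $\norm{A_{r_1}\cdots A_{r_k}}$ in terms of $\rho(A)^k$ shows that $\mathcal{V}(s)$ converges absolutely for $\Re s$ sufficiently large, and the Mellin--Perron formula of order~$2$ expresses $\sum_{0\le n<N} x(n)$ as a contour integral of $u\,\mathcal{V}(s)\, N^{s+1}/(s(s+1))$ along a vertical line in that half-plane.

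The central algebraic step is a functional equation. Split $\mathcal{V}(s)$ according to the least-significant $q$-ary digit~$r$, apply $v(qn+r) = A_r v(n)$, and expand $(qn+r)^{-s}$ in powers of $n^{-1}$ by the binomial series. Summing over $0\le r<q$ and collecting the leading term yields, after some bookkeeping involving the partial sums $B_r$,
\begin{equation*}
    (I - q^{-s} C)\,\mathcal{V}(s) = \mathcal{F}(s),
\end{equation*}
where $\mathcal{F}(s)$ is a vector of Dirichlet-type quantities built from $B_r$ and from translates $\mathcal{V}(s+1), \mathcal{V}(s+2), \ldots$; by iterating, $\mathcal{F}(s)$ is meromorphic in a half-plane extending well to the left of the initial line. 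Consequently $\mathcal{V}(s) = (I - q^{-s}C)^{-1}\mathcal{F}(s)$ is meromorphic, with singularities precisely at the zeros of $\det(I - q^{-s}C)$, i.e.\ at $s = \log_q\lambda + 2\pi i j/\log q$ for $\lambda\in\sigma(C)$ and $j\in\Z$, with pole order bounded by $m_C(\lambda)$.

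One then shifts the Mellin--Perron contour leftward, past all singularities with $\Re s > \log_q R$, to an abscissa $\Re s = \log_q R + \varepsilon$. Each eigenvalue $\lambda\in\sigma(C)$ with $\abs{\lambda}>R$ contributes, via the sum over $j\in\Z$, a main term of the form $N^{\log_q\lambda}(\log N)^k\Phi_{\lambda k}(\fractional{\log_q N})/k!$, where
\begin{equation*}
    \Phi_{\lambda k}(y) = \sum_{j\in\Z} c_{\lambda k j}\,\exp(2\pi i j y)
\end{equation*}
and the Fourier coefficients $c_{\lambda k j}$ come from the Laurent expansion of $u\,\mathcal{V}(s)\,/(s(s+1))$ at $s = \log_q\lambda + 2\pi i j/\log q$. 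The shifted contour integral is the $O$-error term.

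The main obstacle is to establish the uniform vertical-strip bounds on $\mathcal{V}(s)$ needed to justify the contour shift and to prove both the convergence of the Fourier series and the claimed Hölder regularity. These bounds rest on controlling $\norm{A_{r_1}\cdots A_{r_k}}$: under the simple growth property one obtains $O(\rho(A)^k)$ directly, so the choice $R = \rho(A)$ suffices; without simple growth only $\rho(A)^k k^{O(1)}$ is available, and one must enlarge $R$ above $\rho(A)$ so that the intermediate annulus is free of eigenvalues of $C$, allowing the polynomial loss to be absorbed into $N^{\log_q R}(\log N)^{\max m_C(\lambda)}$. The Hölder exponent up to $\log_q(\abs{\lambda}/R)$ for $\Phi_{\lambda k}$ is then obtained by estimating the decay of $c_{\lambda k j}$ as $\abs{j}\to\infty$ from the vertical-strip bounds and invoking the standard Paley--Wiener criterion for Hölder regularity of Fourier series.
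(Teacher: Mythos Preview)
The paper does not prove Theorem~\ref{thm:asymp}; it is quoted verbatim from \cite{Heuberger-Krenn:2018:asy-regular-sequences} (with antecedents in Dumas~\cite{Dumas:2013:joint, Dumas:2014:asymp}) and then used as a black box throughout. There is consequently no ``paper's own proof'' against which to compare your proposal.

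For what it is worth, your outline is essentially the strategy carried out in the cited reference: one forms the matrix-valued Dirichlet series attached to the right vector-valued sequence, derives from $v(qn+r)=A_r v(n)$ a functional equation of the type $(I-q^{-s}C)\,\mathcal V(s)=\mathcal F(s)$, reads off poles on the arithmetic progressions $s=\log_q\lambda + 2\pi i j/\log q$ for $\lambda\in\sigma(C)$, and recovers the summatory function through a Mellin--Perron inversion, with the role of $R$ and the simple growth property exactly as you describe. One technical correction: the order-$2$ Mellin--Perron formula does not yield $\sum_{0\le n<N}x(n)$ directly but a Ces\`aro-type smoothed sum $\frac1N\sum_{0\le n<N}(N-n)x(n)$; passing back to the unsmoothed partial sum requires an additional pseudo-Tauberian argument, which is also where the H\"older regularity of the $\Phi_{\lambda k}$ is actually established in~\cite{Heuberger-Krenn:2018:asy-regular-sequences}. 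This is a refinement of, rather than a gap in, your sketch.
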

Note that~\cite{Heuberger-Krenn:2018:asy-regular-sequences} also contains
results on how to compute the Fourier coefficients of the periodic fluctuations
$\Phi_{\lambda k}$.

\subsection{Summatory Functions of Regular Sequences}\label{section:introduction-summatory-functions}

As announced in Section~\ref{section:problem-statement}, within this paper, we
show that for all regular sequences except for some degenerate cases, there is a non-negative integer $k$
such that the $k$-fold summatory function admits a good asymptotic
expansion. In order to formulate our result, we first fix a notation for
summatory functions.

\begin{definition}
  For a sequence $x\colon \N_0\to\C^D$ (for some positive integer $D$), define
  the sequence $\Sigma x\colon \N_0\to\C^D$ by
  \begin{equation*}
    (\Sigma x)(N)=\sum_{0\le n<N}x(n).
  \end{equation*}
  We use the convention that $\Sigma$ binds more strongly than evaluation,
  i.e., we write $\Sigma x (N)$ instead of $(\Sigma x)(N)$.
\end{definition}

We are now able to formulate our result.
\begin{theorem}\label{theorem:k-fold-summatory-function}
  Let $x$ be a $q$-regular sequence with linear representation $(u, A,
  w)$ and set $C\coloneqq \sum_{0\le r<q}A_r$. Assume that $C$ has a non-zero eigenvalue.

  Then there is a non-negative integer $k$ such that $\Sigma^k x$ admits a good
  asymptotic expansion.
\end{theorem}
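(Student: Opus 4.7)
My plan is to exhibit, for each $k\ge 1$, an explicit linear representation of $\Sigma^{k}x$ obtained from the given one, and then to apply Theorem~\ref{thm:asymp} to it after $k$ has been chosen large enough. From the defining recurrence $v(qN+r)=A_{r}v(N)$ and the easily derived identity $\Sigma v(qN+r)=C\,\Sigma v(N)+B_{r}v(N)$, the stacked vector $\binom{\Sigma v(N)}{v(N)}$ is the right vector-valued sequence of a $q$-regular representation of $\Sigma x$ with block matrices
\begin{equation*}
  \tilde A_{r}=\begin{pmatrix} C & B_{r}\\ 0 & A_{r}\end{pmatrix},
  \qquad
  \tilde C=\sum_{r=0}^{q-1}\tilde A_{r}=\begin{pmatrix} qC & \sum_{r}B_{r}\\ 0 & C\end{pmatrix},
\end{equation*}
so that $\sigma(\tilde C)=q\sigma(C)\cup\sigma(C)$. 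Iterating this construction $k$ times produces a representation of $\Sigma^{k}x$ whose ``$C$-matrix'' $C^{(k)}$ is block upper triangular and has $q^{j}C$ appearing as a diagonal block for each $0\le j\le k$; in particular $\sigma(C^{(k)})=\bigcup_{j=0}^{k}q^{j}\sigma(C)$.

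I would then control the joint spectral radius of the family $A^{(k)}$ used to represent $\Sigma^{k}x$. Because all its matrices share the same block upper triangular pattern, an inductive estimate for block triangular families gives
\begin{equation*}
  \rho(A^{(k)})\le \max\bigl(q^{k-1}\,r,\ \rho(A)\bigr),
\end{equation*}
where $r\coloneqq\max\set{\abs{\lambda}:\lambda\in\sigma(C)}$ denotes the ordinary spectral radius of $C$. Assume first that $r>0$. Then the top eigenvalue modulus $q^{k}r$ of $C^{(k)}$ strictly exceeds both $q^{k-1}r$ (because $q\ge 2$) and $\rho(A)$ as soon as $k>\log_{q}(\rho(A)/r)$. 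Choosing the threshold $R$ in Theorem~\ref{thm:asymp} just above $\max\bigl(q^{k-1}r,\,\rho(A)\bigr)$ isolates this top eigenvalue from the error exponent, and Theorem~\ref{thm:asymp} delivers a good asymptotic expansion of $\Sigma^{k}x$ whose leading term is governed by an eigenvalue of $C^{(k)}$ of modulus $q^{k}r$.

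The main obstacle is the degenerate subcase in which $C\ne 0$ is nilpotent, so that $r=0$: the block construction above then yields $C^{(k)}$ with only the eigenvalue $0$ for every $k$, and Theorem~\ref{thm:asymp} applied to it cannot supply any nonzero leading term. I plan to address this subcase by passing to a \emph{minimal} linear representation of $\Sigma^{k}x$. Nilpotency of $C$ forces $\Sigma v(q^{m}N)=C^{m}\,\Sigma v(N)=0$ for all $N$ once $m$ is large enough, which together with $C\ne 0$ imposes strong arithmetic constraints on $\Sigma v$, and hence on $\Sigma^{k}x$; the aim is to translate these into the statement that for some $k$ the minimal representation of $\Sigma^{k}x$ has a $C$-matrix whose spectral radius strictly exceeds its joint spectral radius, so that Theorem~\ref{thm:asymp} applied in the minimal representation (rather than to the block construction used above) supplies the good asymptotic expansion. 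Making this reduction precise---relating the spectrum of the minimal $C$-matrix to the genuine growth rate of $\Sigma^{k}x$ and ruling out a trivial-sequence collapse---is the delicate point, and is where I expect the bulk of the proof effort to lie.
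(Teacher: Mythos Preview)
Your construction of the block upper triangular representation of $\Sigma^{k}x$, the identification of $\sigma(C^{(k)})=\bigcup_{j=0}^{k}q^{j}\sigma(C)$, the bound $\rho(A^{(k)})=\max\{q^{k-1}r,\rho(A)\}$ via the diagonal-block rule for joint spectral radii, and the choice of $k$ large enough that $q^{k}r>\rho(A)$ are exactly the paper's argument (its Lemmata on summation and iterated summation, followed by a one-paragraph proof).

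Where you diverge is the nilpotent subcase $r=0$. The paper does \emph{not} treat it: its proof ends with the sentence ``Such a $k$ exists because $r>0$ (as $C\neq 0$),'' i.e., it simply asserts that $C\neq 0$ forces positive spectral radius. That implication is of course false for general matrices (any nonzero nilpotent $C$ is a counterexample), so what you have identified is a genuine imprecision in the paper rather than a case you are expected to resolve. Your proposed remedy---passing to a minimal representation and hoping its $C$-matrix acquires a nonzero eigenvalue---is unlikely to succeed as stated: minimality does not change the growth of $\Sigma^{k}x$, and if $C$ is nilpotent then $\Sigma v(q^{m}N)=0$ for all large $m$, which pins the summatory function to a bounded (in fact eventually periodic along $q$-adic rays) behaviour that no choice of $k$ will promote to a dominant eigenvalue. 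In short, the ``degenerate cases'' alluded to in the paper's introduction should really be read as ``$C$ nilpotent'' rather than ``$C=0$''; for the purposes of matching the paper's proof you can drop the nilpotent discussion entirely.
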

This theorem is proved in Section~\ref{section:proof-summatory-function}.

\subsection{Divide-and-Conquer Sequences: Definition and State of the Art}\label{section:introduction-divide-and-conquer-state-of-the-art}
Hwang, Janson, and Tsai~\cite{Hwang-Janson-Tsai:ta:ident} study sequences $x$
with
\begin{equation}\label{eq:divide-and-conquer}
  x(n) = \alpha x\Bigl(\floor[\Big]{\frac n2}\Bigr) + \beta x\Bigl(\ceiling[\Big]{\frac
    n2}\Bigr) + g(n)
\end{equation}
for $n\ge 2$, where $\alpha$ and $\beta$ are two given positive constants,
$g$ is a given function, called the toll function, and $x(1)$ is given.

The simplest version of their result is summarised in the following theorem;
more general (weaker assumptions on $g$) versions are also available.

\begin{theorem}[{\cite[Corollary~2.14]{Hwang-Janson-Tsai:ta:ident}}]Let $x$ be
  a sequence satisfying~\eqref{eq:divide-and-conquer}.
  Assume that there is an $\varepsilon>0$ such that
  $g(n)=O(n^{\log_2(\alpha+\beta)-\varepsilon})$. Then
  \begin{equation*}
    x(n) = n^{\log_2(\alpha+\beta)}\Phi(\fractional{\log_2 n}) + O(n^{\log_2(\alpha+\beta)-\varepsilon})
  \end{equation*}
  for $n\to\infty$ where $\Phi$ is a continuous, $1$-periodic function.
\end{theorem}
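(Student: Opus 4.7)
The plan is to realise~$x$ as the summatory function of its forward difference and then apply Theorem~\ref{thm:asymp}. Setting $y \coloneqq \Delta x$ with $y(n) = x(n+1) - x(n)$, we have $x(N) = x(0) + \Sigma y(N)$ for every $N \ge 1$, so asymptotics of~$x$ are equivalent to those of~$\Sigma y$. As is standard for divide-and-conquer sequences, we may assume~$g$ is itself $2$-regular, which makes~$x$ and hence~$y$ $2$-regular~\cite[Theorems~2.5 and~2.6]{Allouche-Shallit:1992:regular-sequences}.

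A direct computation, obtained by subtracting the even and odd instances of~\eqref{eq:divide-and-conquer}, shows that~$y$ satisfies the particularly simple scalar recurrence
\begin{align*}
  y(2n)   &= \beta\, y(n) + \bigl(g(2n+1) - g(2n)\bigr),\\
  y(2n+1) &= \alpha\, y(n) + \bigl(g(2n+2) - g(2n+1)\bigr).
\end{align*}
Bundling $y(n)$ with a linear representation of the forward difference of~$g$ yields an explicit linear representation $(u, (A_0, A_1), w)$ of~$y$ in which the scalar $y$-coordinate contributes a diagonal pair $(\beta, \alpha)$. Consequently $C \coloneqq A_0 + A_1$ has $\alpha+\beta$ as an eigenvalue coming from the $y$-coordinate, while its remaining eigenvalues stem from the block handling the forward difference of~$g$.

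To conclude, set $R = 2^{\log_2(\alpha+\beta)-\varepsilon}$. The hypothesis $g(n) = O(n^{\log_2(\alpha+\beta)-\varepsilon})$ bounds both the joint spectral radius and the eigenvalues of the $g$-block by~$R$; together with $\max(\alpha,\beta) \le R$ (the natural regime in which an error term of this strength is meaningful), this gives $\rho(A) \le R$ and makes $\alpha+\beta$ the unique eigenvalue of~$C$ of modulus above~$R$. Theorem~\ref{thm:asymp} applied to~$y$ then yields
\begin{equation*}
  \Sigma y(N) = N^{\log_2(\alpha+\beta)}\, \Phi(\fractional{\log_2 N}) + O\bigl(N^{\log_2(\alpha+\beta)-\varepsilon}\bigr),
\end{equation*}
and adding~$x(0)$ produces the claim. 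The main obstacle is the spectral bookkeeping: verifying that no extra eigenvalue of~$C$ intrudes into the annulus $\rho(A) < \abs{\lambda} \le R$, that $\alpha+\beta$ is a simple eigenvalue (so $m_C(\lambda) = 1$ and no spurious $\log N$ factor arises), and that boundary values (e.g.\ $x(1)$ and the restriction $n \ge 2$ in~\eqref{eq:divide-and-conquer}) are correctly absorbed into the linear representation.
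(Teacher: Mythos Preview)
The paper does not prove this theorem; it is quoted from~\cite{Hwang-Janson-Tsai:ta:ident} as background. Your outline follows the paper's general strategy for divide-and-conquer sequences (see Remark~\ref{remark:divide-and-conquer-general-remark} and Lemma~\ref{lemma:forward-difference-generic}): pass to the forward difference, observe the block-triangular shape with scalar diagonal entries $\beta$, $\alpha$, and apply Theorem~\ref{thm:asymp}. For polynomial toll functions the paper carries this out completely (Theorem~\ref{theorem:divide-conquer-polynomial-toll-function}), but it never attempts the version you are tackling, precisely because the hypotheses do not match what Theorem~\ref{thm:asymp} needs.

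The gap is the step where you convert the growth bound $g(n)=O(n^{\log_2(\alpha+\beta)-\varepsilon})$ into a bound on the joint spectral radius and on the eigenvalues of the $g$-block. First, the cited corollary does not assume that $g$ is $2$-regular at all, so ``we may assume'' is already an extra hypothesis. Second, even granting regularity, a pointwise bound $g(n)=O(n^s)$ does \emph{not} force the joint spectral radius of any linear representation of $g$ (or of $\Delta g$) to be at most $2^s$: the implication goes the other way only. A regular sequence can be $O(1)$ while every linear representation has arbitrarily large joint spectral radius due to cancellation in the matrix products. Hence you cannot conclude $\rho(A)\le R$, nor can you rule out further eigenvalues of $C$ in the annulus $R<\abs{\lambda}<\alpha+\beta$, nor guarantee that $\alpha+\beta$ is simple in $C$. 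This is exactly why the paper restricts to explicit toll functions (polynomials, constants) where the matrices are written down and the spectral data are computed by hand; the general growth hypothesis of Hwang--Janson--Tsai lies outside the reach of Theorem~\ref{thm:asymp} as stated.
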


\subsection{Divide-and-Conquer Sequences: Polynomial Toll Function}\label{section:introduction-divide-and-conquer-polynomial-toll-function}

For divide-and-conquer sequences $g$, a sequence satisfying the
recurrence~\eqref{eq:divide-and-conquer} can be seen as a regular sequence:
It is not hard to see that we have
\begin{equation}
  \label{eq:divide-and-conquer-as-recursive}
  \begin{aligned}
    x(2n)&=(\alpha+\beta)x(n)+g(2n),\\
    x(2n+1)&=\alpha x(n)+\beta x(n+1)+g(2n+1)
  \end{aligned}
\end{equation}
for $n\ge 1$. Thus $x$ is a $2$-recursive sequence in the sense of
\cite{Heuberger-Krenn-Lipnik:2021:asymp-analy-recur-sequen} and therefore
$2$-regular by
\cite[Corollary~D]{Heuberger-Krenn-Lipnik:2021:asymp-analy-recur-sequen}.
Alternatively, a linear representation for $x$ can also be constructed directly
from~\eqref{eq:divide-and-conquer-as-recursive}: the associated right
vector-valued sequence consists of $n\mapsto x(n)$, $n\mapsto x(n+1)$ and the right
vector-valued sequence associated to a linear representation of $g$; the
matrices of the linear representation can then easily be reconstructed
from~\eqref{eq:divide-and-conquer-as-recursive} and the linear representation
of $g$. The fact that~\eqref{eq:divide-and-conquer-as-recursive} holds only for
$n\ge 1$ (instead of $n\ge0$) can be fixed; see \cite[Proof of
Lemma~4.1]{Allouche-Shallit:1992:regular-sequences}
or~\cite[Theorem~B]{Heuberger-Krenn-Lipnik:2021:asymp-analy-recur-sequen}.

As announced in Section~\ref{section:problem-statement}, our goal is to see
what can be said about the asymptotics of $x(n)$ for $n\to\infty$ using
Theorem~\ref{thm:asymp}. While the method works for arbitrary regular toll
functions; see Remark~\ref{remark:divide-and-conquer-general-remark} (although
good asymptotic expansions cannot be guaranteed in all cases), we formulate our
main result for polynomial toll functions; first versions are contained in the
master's thesis~\cite{Lechner:2024:applic-theor} of the third author.

\begin{theorem}\label{theorem:divide-conquer-polynomial-toll-function}
  Let $g(n)= \sum_{i=0}^{k}c_{i}n^i$ be a polynomial of degree $k\ge 1$, $x$
  be a sequence satisfying~\eqref{eq:divide-and-conquer}.
  Then the asymptotic behaviour of $x(n)$ for $n\to\infty$ can be described as
  follows, where $\Phi$ and $\Psi$ are
  $1$-periodic continuous functions.
  \begin{itemize}
  \item \textbf{Case 1a.}
    If $\alpha+\beta > 2^k$ and $2^k > \max \set{\alpha, \beta}$, then
    \begin{equation*}
      x(n) = n^{\log_{2}(\alpha+\beta)} \Phi(\fractional{\log_{2}n}) + n^{k} \Psi(\fractional{\log_{2}n})+ O(n^{\log_{2}\max \set{\alpha, \beta}}).
    \end{equation*}
  \item \textbf{Case 1b.}
    If $\alpha+\beta > 2^k$ and $\max \set{\alpha, \beta} \ge 2^k$, then
    \begin{equation*}
      x(n) = n^{\log_{2}(\alpha+\beta)} \Phi
               (\fractional{\log_{2}n}) +  O(n^{\log_{2}\max \set{\alpha,
               \beta}}(\log n)^{\iverson{\max\set{\alpha, \beta}=2^k}}).
    \end{equation*}
  \item \textbf{Case 2.}
    If $\alpha+\beta = 2^k$, then
    \begin{equation*}
      x(n) = n^{k} (\log n) \Phi(\fractional{\log_{2}n})+n^{k} \Psi(\fractional{\log_{2}n})
             + O(n^{\log_{2} \max \set{\alpha, \beta}+\iverson{\alpha=\beta}\varepsilon})
    \end{equation*}
    for any $\varepsilon>0$.
  \item \textbf{Case 3.}
    If $2^k > \alpha+\beta > 2^{k-1}$, then
    \begin{multline*}
      x(n) = n^{k} \Phi(\fractional{\log_{2}n}) + n^{\log_{2}(\alpha+\beta)}
      \Psi(\fractional{\log_{2}n})\\+ O(n^{\log_{2}\max
        \set{\alpha,\beta,2^{k-1}}+\iverson{\max
          \set{\alpha,\beta}=2^{k-1}}\varepsilon}(\log n)^{\iverson{\max\set{\alpha, \beta}<2^{k-1}}})
    \end{multline*}
    for any $\varepsilon>0$.
  \item \textbf{Case 4.}
    If $2^{k-1} \ge \alpha+\beta $, then
    \begin{equation*}
      x(n)= n^{k} \Phi(\fractional{\log_{2}n}) + O(n^{k-1}(\log n)^E),
    \end{equation*}
    where
    \begin{equation*}
      E \coloneqq
      1+\iverson{\alpha+\beta=2^{k-1}}
      (\iverson{k\ge 2 \text{ and } c_{k-1}\neq 0}
      + \iverson{k=1 \text{ and } d_{0}+d_1\neq 0})
    \end{equation*}
    with
    \begin{align*}
      d_0&\coloneqq (1-\beta)x(1)-g(1)+g(0),&
      d_1&\coloneqq g(1)-(1-\beta)x(1).
    \end{align*}
  \end{itemize}
\end{theorem}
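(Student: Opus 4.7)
The plan is to realise $x$ as the summatory function of its forward difference $y(n) \coloneqq x(n+1)-x(n)$, then apply Theorem~\ref{thm:asymp} to $y$, using $\Sigma y(N) = x(N)-x(0)$. Starting from~\eqref{eq:divide-and-conquer-as-recursive}, one computes
\begin{align*}
  y(2n)   &= \beta  y(n) + \bigl(g(2n+1)-g(2n)\bigr),\\
  y(2n+1) &= \alpha y(n) + \bigl(g(2n+2)-g(2n+1)\bigr),
\end{align*}
where the two toll differences are polynomials in $n$ of degree $k-1$. Taking $v(n) \coloneqq (y(n),\, 1,\, n,\, n^{2},\, \ldots,\, n^{k-1})^{T}$ as the right vector-valued sequence gives a linear representation of $y$ with block-upper-triangular matrices $A_0, A_1$: the $y$-block is the scalar $\beta$ resp.\ $\alpha$, and the $k\times k$ polynomial block is upper triangular with diagonal $1, 2, \ldots, 2^{k-1}$ on both (with $A_0$ diagonal on this block and $A_1$ carrying the binomial coefficients from $(2n+1)^{j}$); the top-row coupling entries linking the polynomial block to the $y$-row are read off from the coefficients of $\Delta g$.

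Consequently, $C = A_0 + A_1$ is block-upper-triangular with $y$-block $\alpha+\beta$ and polynomial block upper triangular with diagonal $2, 4, \ldots, 2^{k}$, so $\sigma(C) = \set{\alpha+\beta,\ 2,\ 4,\ \ldots,\ 2^{k}}$, and $\rho(A) = \max\set{\max\set{\alpha,\beta},\, 2^{k-1}}$ with simple growth generically. Applying Theorem~\ref{thm:asymp} to $y$, the four cases of the target theorem correspond to orderings of $\alpha+\beta$ and $2^{k}$ relative to $\rho(A)$: in Cases~1a/1b the dominant main term is $n^{\log_2(\alpha+\beta)}$, with $n^{k}$ either a secondary main term (Case~1a) or absorbed into the error (Case~1b); in Cases~3/4 the roles of $\alpha+\beta$ and $2^{k}$ are swapped. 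The critical Case~2, $\alpha+\beta = 2^{k}$, is where these two dominant eigenvalues coincide; the $(1,k+1)$-entry of $C$, a nonzero multiple of the leading coefficient $c_{k}$, couples the $y$-block to the bottom of the polynomial block and forces a size-$2$ Jordan block at $2^{k}$, producing the $n^{k}\log n$ main term. In Case~4, the logarithmic exponent $E$ of the $n^{k-1}$ error analogously reflects the Jordan structure of $C$ at the secondary polynomial eigenvalue $2^{k-1}$ (governed by $c_{k-1}$ for $k \ge 2$; for $k = 1$ the polynomial block degenerates to a single basis element and the relevant coupling to the $y$-block instead comes from the initial value $x(1)$, as encoded in $d_{0}$ and $d_{1}$ after the recurrence~\eqref{eq:divide-and-conquer-as-recursive} is patched to hold from $n=0$).

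The main obstacle will be the fine-grained analysis of the Jordan structure of $C$ at the coinciding and repeated eigenvalues---verifying in each subcase that the relevant coupling entries are nonzero so the Jordan blocks have the claimed maximal size---together with the precise determination of $R$ and of the simple growth property at the borderline cases (e.g.\ $\max\set{\alpha,\beta} = 2^{k}$, $\alpha=\beta=2^{k-1}$), since these are exactly what govern the $\iverson{\cdot}$-indicators and extra $\log$-factors in the error terms. The computations themselves are routine---explicit binomial expansions of $\Delta g(2n)$ and $\Delta g(2n+1)$ plus linear-algebra of a very structured triangular matrix---but distributing them across the many subcases requires care.
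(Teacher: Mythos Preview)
Your approach is essentially the paper's: write $x=\Sigma y + x(0)$ for the forward difference $y$, build a block-triangular linear representation with a scalar $y$-block over a triangular polynomial block, read off $\sigma(C)=\{\alpha+\beta,2,4,\ldots,2^k\}$ and $\rho(A)=\max\{\alpha,\beta,2^{k-1}\}$, and run the case split via Theorem~\ref{thm:asymp}. The Jordan-block arguments you sketch (the $c_k$-coupling forcing $m_C(2^k)=2$ in Case~2, and the $c_{k-1}$ resp.\ $d_0+d_1$ coupling governing $m_C(2^{k-1})$ in Case~4) are exactly what the paper does.

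One point needs to be moved forward rather than left as a parenthetical. The recurrence you derive for $y$ from~\eqref{eq:divide-and-conquer-as-recursive} is valid only for $n\ge 1$, because~\eqref{eq:divide-and-conquer-as-recursive} itself holds only for $n\ge 1$. A linear representation must satisfy $v(qn+r)=A_r v(n)$ for \emph{all} $n\ge 0$, so the correction terms $d_0\delta_0(n)$, $d_1\delta_0(n)$ have to be added to the two lines of your $y$-recurrence from the start, and $\delta_0(n)$ included as an extra coordinate of $v$. This is not just a $k=1$ phenomenon: without it your $(u,A,w)$ is not a linear representation of $y$ for generic $x(1)$. The paper isolates this patching in a separate lemma before constructing $A_r$. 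The effect on the analysis is mild---the extra coordinate contributes diagonal entries $1$ and $0$ to $A_0$, $A_1$, hence an eigenvalue $1$ to $C$, which for $k\ge 2$ sits strictly below $\rho(A)\ge 2^{k-1}$ and changes nothing---but for $k=1$ it is precisely this extra eigenvalue $1$ that can collide with $\alpha+\beta=2^{k-1}=1$ and produce the $d_0+d_1$ clause in~$E$. So the ``patch'' is structurally required everywhere, not only where it becomes visible.
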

This theorem is proved in Section~\ref{section:proof-divide-conquer-polynomial-toll-function}.

The case of a constant toll function is somewhat simpler.

\begin{theorem}\label{theorem:constant-toll-function}
  Let $g(n)= c_0$ be a constant toll function and let $x$
  be a sequence satisfying~\eqref{eq:divide-and-conquer}. Let $d_0$ and $d_1$
  be defined as in
  Theorem~\ref{theorem:divide-conquer-polynomial-toll-function}.
  Then the asymptotic behaviour of $x(n)$ for $n\to\infty$ can be described as
  follows, where $\Phi$ is a
  $1$-periodic continuous function.
  \begin{itemize}
  \item \textbf{Case 1.}
    If $d_0=d_1=0$, then
    \begin{equation*}
      x(n) = n^{\log_{2}(\alpha+\beta)} \Phi(\fractional{\log_{2}n}).
    \end{equation*}
  \item \textbf{Case 2a.}
      If $d_0\neq 0$ or $d_1\neq 0$, and $\alpha+\beta>1$, then
      \begin{multline*}
        x(n) = n^{\log_{2}(\alpha+\beta)} \Phi(\fractional{\log_{2}n}) \\+
        O(n^{\log_{2}\max \set{\alpha, \beta, 1}+\iverson{\max\set{\alpha, \beta}=1}\varepsilon}
        (\log n)^{\iverson{\max\set{\alpha,\beta}< 1}})
      \end{multline*}
       for any $\varepsilon>0$.
  \item \textbf{Case 2b.}
      If $d_0\neq 0$ or $d_1\neq 0$, and $\alpha+\beta\le 1$, then
      \begin{equation*}
        x(n) = O((\log n)^{\iverson{\alpha+\beta=1 \text{ and } d_0+d_1\neq 0}}).
      \end{equation*}
  \end{itemize}
\end{theorem}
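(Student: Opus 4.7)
The plan is to treat Theorem~\ref{theorem:constant-toll-function} as the degenerate endpoint $k=0$ of Theorem~\ref{theorem:divide-conquer-polynomial-toll-function}, following the same strategy used in Section~\ref{section:proof-divide-conquer-polynomial-toll-function} but tracking only the two eigenvalues $\alpha+\beta$ and~$1$ in place of the family $\alpha+\beta$, $2^{0}$, $2^{1},\ldots,2^{k}$ that appeared there.

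First, I would write $x=\Sigma y+x(0)$ (choosing $x(0)$ freely, since the recurrence~\eqref{eq:divide-and-conquer} uses only $x(1)$) where $y(n)\coloneqq x(n+1)-x(n)$. A short computation from~\eqref{eq:divide-and-conquer-as-recursive} gives $y(2m)=\beta y(m)$ and $y(2m+1)=\alpha y(m)$ for $m\ge 1$, so $y$ is $2$-regular. Since this recurrence may fail at $m=0$ (because the original recurrence for $x$ holds only for $n\ge 2$), a genuine linear representation $(u,A,w)$ is obtained by the standard procedure of enlarging the right vector-valued sequence by a constant coordinate that handles the boundary, exactly as in~\cite[Proof of Lemma~4.1]{Allouche-Shallit:1992:regular-sequences}; this is where the correction constants $d_{0}$ and $d_{1}$ naturally appear.

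I would then compute $C=A_{0}+A_{1}$ and the joint spectral radius~$\rho(A)$. The scalar recursion for $y$ contributes the eigenvalue $\alpha+\beta$ to~$\sigma(C)$, the auxiliary constant coordinate contributes the eigenvalue~$1$, and one gets $\rho(A)=\max\set{\alpha,\beta}$ with the simple growth property in the generic configuration. Feeding this into Theorem~\ref{thm:asymp} yields the skeleton of Cases~2a and~2b: a term $n^{\log_{2}(\alpha+\beta)}\Phi(\fractional{\log_{2}n})$ from $\lambda=\alpha+\beta$ whenever this eigenvalue exceeds~$R$, and an error of order $\max\set{\alpha,\beta,1}$ coming from~$\rho(A)$ together with the contribution at $\lambda=1$, supplemented by the logarithmic and $\varepsilon$-perturbation factors appropriate for the boundary subcases $\max\set{\alpha,\beta}=1$ and $\max\set{\alpha,\beta}=\alpha+\beta$.

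The main obstacle, and the reason for the separate formulation of Case~1, is that the assumption $d_{0}=d_{1}=0$ permits a strictly smaller linear representation in which the eigenvalue $\lambda=1$ no longer occurs: the auxiliary constant coordinate becomes superfluous, so that $\sigma(C)=\set{\alpha+\beta}$, and this unique eigenvalue of~$C$ then automatically exceeds $R=\rho(A)$. The error term in~\eqref{eq:main-asymptotic-expansion} vanishes and only the main fluctuation survives, as required. Verifying this reduction is the technical heart of Case~1 and requires expressing the initial vector~$w$ as an explicit linear combination involving $d_{0}$, $d_{1}$ and a remainder supported on the kernel of the projection onto the $1$-eigenspace of~$C$. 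Finally, Case~2b is obtained by observing that when $\alpha+\beta\le 1$ the eigenvalue $\alpha+\beta$ no longer exceeds~$R$, so no main term remains; the boundary $\alpha+\beta=1$ with $d_{0}+d_{1}\neq 0$ then produces the $\log n$ factor through a nontrivial Jordan block at $\lambda=1$ of~$C$.
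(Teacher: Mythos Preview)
Your approach is the same as the paper's: pass to the forward difference~$h$, use the linear representation supplied by Lemma~\ref{lemma:h-linear-representation} in the case $k=0$ (namely $A_0=\bigl(\begin{smallmatrix}\beta&d_0\\0&1\end{smallmatrix}\bigr)$, $A_1=\bigl(\begin{smallmatrix}\alpha&d_1\\0&0\end{smallmatrix}\bigr)$, or the $1\times 1$ scalars $(\beta)$, $(\alpha)$ when $d_0=d_1=0$), and read off the cases from Theorem~\ref{thm:asymp}.

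Two points need correcting. First, your computation $\rho(A)=\max\set{\alpha,\beta}$ is wrong in the $2\times 2$ case: the auxiliary $\delta_0$-coordinate contributes diagonal entries $1$ and $0$, so in fact $\rho(A)=\max\set{\alpha,\beta,1}$. This matters precisely when $\max\set{\alpha,\beta}<1$: with the correct value, the eigenvalue $1$ of~$C$ lies \emph{on} the threshold~$R$ and, via $m_C(1)=1$, is what produces the factor $(\log n)^{\iverson{\max\set{\alpha,\beta}<1}}$ in Case~2a. Under your bookkeeping---$\rho(A)<1$ and the $\lambda=1$ contribution absorbed as an $O(1)$ main term---that logarithm would vanish and you would not recover the statement as written. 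Second, Case~1 is much simpler than you suggest: when $d_0=d_1=0$ the matrices $A_r$ become block diagonal and the $\delta_0$-block can simply be deleted (this is the last assertion of Lemma~\ref{lemma:h-linear-representation}), leaving a one-dimensional representation with $C=(\alpha+\beta)$ and $\rho(A)=\max\set{\alpha,\beta}<\alpha+\beta$; no eigenspace projection of~$w$ is needed.
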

This theorem is also proved in Section~\ref{section:proof-divide-conquer-polynomial-toll-function}.

\subsection{Example}\label{section:example}
\begin{figure}[tbp]
  \includegraphics[width=\textwidth]{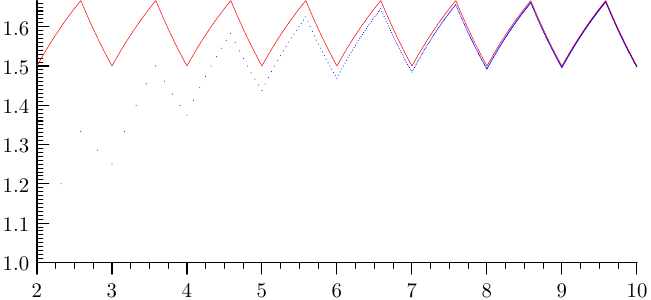}
  \caption{Comparison of the $1$-periodic function $\Phi$ in the
    asymptotic expansion determined using
    Theorem~\ref{theorem:constant-toll-function} with the empirical values of
    the sequence.}
  \label{fig:minmax_example}
\end{figure}
We conclude this introductory section with one example to illustrate the results.
\begin{example}\label{example:find-minimum-maximum}
  Consider the divide-and-conquer algorithm for finding the minimum and the
  maximum of a list of $n$ elements. The number $x(n)$ of comparisons needed
  satisfies~\eqref{eq:divide-and-conquer} for $n\ge 3$ with $a=b=1$ and $g(n)=2$
  for $n\ge 3$ and with $x(1)=0$ and $x(2)=1$; cf.\
  \cite[Example~3.2]{Hwang-Janson-Tsai:2017:divide-conquer-half}.

  By Theorem~\ref{theorem:constant-toll-function} (and
  \cite[Theorem~B]{Heuberger-Krenn-Lipnik:2021:asymp-analy-recur-sequen} to deal with the
  fact that the divide-and-conquer recurrence is only valid for $n\ge 3$
  instead of $n\ge 2$; see Section~\ref{section:details-example} for details), we get
  \begin{equation*}
    x(n)=n\Phi(\fractional{\log_2 n}) + O(n^{\varepsilon})
  \end{equation*}
  for some $1$-periodic continuous function $\Phi$ and any $\varepsilon>0$. The Fourier coefficients of $\Phi$ can be computed;
  cf.\ Figure~\ref{fig:minmax_example}.
\end{example}

\section{Summatory Functions: Proof of Theorem~\ref{theorem:k-fold-summatory-function}}\label{section:proof-summatory-function}
Before proving Theorem~\ref{theorem:k-fold-summatory-function}, we collect two
lemmata on the linear representations of summatory functions and $k$-fold
summatory functions.

The following lemma is implicitly shown in~\cite[Lemma~12.2]{Heuberger-Krenn:2018:asy-regular-sequences}, however, it is crucial for our
purposes, so we provide a precise formulation and
will prove it for self-containedness.

\begin{lemma}\label{lemma:summation}
  Let $x$ be a $q$-regular sequence with linear representation $(u, A,
  w)$ and associated right vector-valued sequence~$v$. Set
  \begin{equation}\label{eq:definitions-B-C}
    B_r\coloneqq \sum_{0\le s<r}A_s\text{ for $0\le r< q$ and }C\coloneqq \sum_{0\le s<q}A_s.
  \end{equation}

  Then we have
  \begin{equation}\label{eq:sigma-v-recurrence-simple}
    \Sigma v(qN+r)=C\,\Sigma v(N) + B_r v(N)
  \end{equation}
  for all $N\ge 0$ and $0\le r<q$.

  Additionally, $\Sigma x$ is regular with linear representation $(\widetilde u,
  \widetilde A, \widetilde w)$ with
  \begin{align*}
    \widetilde u&\coloneqq (u, 0),\\
    \widetilde A_r&\coloneqq \begin{pmatrix}C&B_r\\0&A_r\end{pmatrix}\text{ for $0\le r<q$},\\
    \widetilde w&\coloneqq \begin{pmatrix}0\\w\end{pmatrix};
  \end{align*}
  the associated right vector-valued sequence is
  $\bigl(\begin{smallmatrix}\Sigma v\\v\end{smallmatrix}\bigr)$.
\end{lemma}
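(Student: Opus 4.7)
The plan is to prove the two assertions in order, first establishing the recurrence~\eqref{eq:sigma-v-recurrence-simple} by a direct manipulation of the sum, and then deriving the full linear representation as a formal consequence by block-matrix bookkeeping.

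For the recurrence, I would split the range $\set{0, \ldots, qN+r-1}$ according to the last $q$-ary digit. Writing every $n$ in this range uniquely as $n = qm + s$ with $0 \le s < q$, the indices with $0 \le m < N$ contribute all $q$ residues, while the indices $qN, \ldots, qN+r-1$ contribute only the residues $0 \le s < r$ with $m = N$. This gives
\begin{equation*}
  \Sigma v(qN+r) = \smashoperator{\sum_{0\le m<N}}\ \smashoperator{\sum_{0\le s<q}} v(qm+s) + \smashoperator{\sum_{0\le s<r}} v(qN+s).
\end{equation*}
Substituting~\eqref{equation:regular-sequence} replaces $v(qm+s)$ by $A_s v(m)$ and $v(qN+s)$ by $A_s v(N)$; pulling the matrix sums out of the index $m$ yields $C\,\Sigma v(N) + B_r v(N)$ by the definitions in~\eqref{eq:definitions-B-C}.

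For the linear representation claim, I would verify the three requirements of Definition~\ref{definition:regular-sequence} for the vector-valued sequence $\widetilde v \coloneqq \bigl(\begin{smallmatrix}\Sigma v\\ v\end{smallmatrix}\bigr)$. First, $\widetilde u\, \widetilde v(N) = u\, \Sigma v(N) = \sum_{0\le n<N} u\, v(n) = \Sigma x(N)$, confirming the output equation. Second, the initial value is $\widetilde v(0) = \bigl(\begin{smallmatrix}\Sigma v(0)\\ v(0)\end{smallmatrix}\bigr) = \bigl(\begin{smallmatrix}0\\ w\end{smallmatrix}\bigr) = \widetilde w$, where $\Sigma v(0) = 0$ follows from the empty sum convention and $v(0) = w$ from the given linear representation of $x$. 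Third, the recurrence $\widetilde v(qN+r) = \widetilde A_r\, \widetilde v(N)$ amounts, block by block, to the recurrence just established together with $v(qN+r) = A_r v(N)$ from~\eqref{equation:regular-sequence}; the block-triangular form of $\widetilde A_r$ is designed precisely to encode these two components.

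No step is genuinely difficult here: the recurrence is a one-line index manipulation, and the linear representation is obtained by packaging this recurrence into a block matrix together with the original one. The only point requiring some care is to make sure the splitting of the summation range in the first step is written unambiguously, in particular that the range with $m = N$ contributes exactly the residues $0 \le s < r$ and no more; once this is done, everything else is formal verification.
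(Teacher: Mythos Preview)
Your proposal is correct and matches the paper's own proof essentially step for step: the paper also splits the sum at $qN$, reindexes each piece by the last $q$-ary digit, applies~\eqref{equation:regular-sequence}, and then reads off the block linear representation from the resulting recurrence together with $\Sigma x = u\,\Sigma v$. Your write-up is slightly more explicit about checking all three ingredients of Definition~\ref{definition:regular-sequence}, but there is no substantive difference in approach.
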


\begin{proof}
  By definition, we have $\Sigma v(0)=0$. Let $N\ge 0$
  and $0\le r<q$. Then
  \begin{equation*}
    \Sigma v(qN+r)=\sum_{0\le n<qN} v(n) + \sum_{qN \le n< qN+r}v(n). 
  \end{equation*}
  Replacing $n$ by $qm+s$ for $m\in\Z$ and $0\le s<q$ in the first sum and
  replacing $n$ by $qN+s$ for $0\le s<r$ in the second sum yields
  \begin{equation*}
    \Sigma v(qN+r)=\sum_{0\le m<N}\sum_{0\le s<q} v(qm+s) + \sum_{0 \le s< r}v(qN+s). 
  \end{equation*}
  Using the linear representation yields
  \begin{equation*}
    \Sigma v(qN+r)=\sum_{0\le m<N}\sum_{0\le s<q} A_s v(m) + \sum_{0 \le s<
      r}A_sv(N)=C\,\Sigma v(N) + B_r v(N).
  \end{equation*}
  In other words, we have shown~\eqref{eq:sigma-v-recurrence-simple}.

  As we have $x=uv$, we also have
  \[\Sigma x=u\,\Sigma v=u\,\Sigma v+0=\widetilde u \begin{pmatrix}\Sigma v\\v\end{pmatrix}.
  \]
  We conclude that $\Sigma x$ has the given linear representation and
  associated right vector-valued sequence.
\end{proof}

\begin{remark}In \cite[Lemma 12.2]{Heuberger-Krenn:2018:asy-regular-sequences},
  a very similar result appears in Equation~(12.1) there. The difference
  between Lemma~\ref{lemma:summation} and that equation is an additional
  summand $(I-A_0)\iverson{qN+r>0}$.

  The reason for the additional summand is that in general, $f(0)=A_0 f(0)$
  (with the notations there) does not hold, cf.\
  also~\cite{Heuberger-Krenn-Lipnik:2022:nodoi-lipics} for a discussion
  of this condition.
\end{remark}

Iterating the results in Lemma~\ref{lemma:summation} leads to the following lemma.

\begin{lemma}\label{lemma:iterated-summation}
  Let $x$ be a $q$-regular sequence with linear representation $(u, A,
  w)$, $k\ge 1$, and use the notations
  from~\eqref{eq:definitions-B-C}.
  Then $\Sigma^kx$ is $q$-regular with linear representation $(\widetilde u,
  \widetilde A, \widetilde w)$ where $\widetilde A_r$ is a block
  upper triangular matrix with diagonal blocks $q^{k-1}C$, $q^{k-2}C$, \dots,
  $qC$, $C$, $A_r$ for $0\le r<q$ and $\widetilde u$ and $\widetilde w$
  are vectors; the associated right vector-valued
  sequence is $(\Sigma^kv, \Sigma^{k-1}v, \ldots, \Sigma v, v)^\top$.
\end{lemma}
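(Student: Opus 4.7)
The plan is to proceed by induction on $k$. The base case $k=1$ is exactly Lemma~\ref{lemma:summation}: the right vector-valued sequence is $(\Sigma v, v)^\top$ and the matrix $\widetilde A_r$ is block upper triangular with diagonal blocks $C$ and $A_r$. For the inductive step, I assume the statement for some $k-1\ge 1$, so that $z^{(k-1)}\coloneqq(\Sigma^{k-1}v,\ldots,\Sigma v,v)^\top$ satisfies $z^{(k-1)}(qN+r)=\widetilde A_r^{(k-1)} z^{(k-1)}(N)$ for all $N\ge 0$ and $0\le r<q$, where $\widetilde A_r^{(k-1)}$ is block upper triangular with diagonal blocks $q^{k-2}C,\ldots,qC,C,A_r$.

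I then mirror the proof of Lemma~\ref{lemma:summation}, but with $\Sigma^{k-1}v$ in place of $v$. Splitting the index $n=qm+s$ in the definition of $\Sigma^{k}v(qN+r)$ gives
\begin{equation*}
\Sigma^{k}v(qN+r)=\sum_{0\le m<N}\sum_{0\le s<q}\Sigma^{k-1}v(qm+s)+\sum_{0\le s<r}\Sigma^{k-1}v(qN+s).
\end{equation*}
By the inductive hypothesis, $\Sigma^{k-1}v(qm+s)$ is the first block component of $\widetilde A_s^{(k-1)} z^{(k-1)}(m)$, and similarly for $\Sigma^{k-1}v(qN+s)$. Summing the first term over $m$ produces a linear combination of the components of $\Sigma z^{(k-1)}(N)=(\Sigma^{k}v(N),\Sigma^{k-1}v(N),\ldots,\Sigma v(N))^\top$, while the residual sum contributes a linear combination of components of $z^{(k-1)}(N)$. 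The key observation is that the top-left diagonal entry $q^{k-2}C$ of $\widetilde A_s^{(k-1)}$ is independent of $s$, so summing it over $0\le s<q$ yields $q\cdot q^{k-2}C = q^{k-1}C$; this is the coefficient of $\Sigma^{k}v(N)$ on the right-hand side, and thus the new top-left diagonal block for $z^{(k)}\coloneqq(\Sigma^{k}v, z^{(k-1)})^\top$.

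All remaining contributions on the right-hand side involve only $\Sigma^{j}v(N)$ for $j<k$, i.e., components of $z^{(k-1)}(N)$. Combined with the inductive recurrence $z^{(k-1)}(qN+r)=\widetilde A_r^{(k-1)} z^{(k-1)}(N)$ in the lower $k$ block rows, this produces
\begin{equation*}
z^{(k)}(qN+r)=\begin{pmatrix} q^{k-1}C & T_r\\ 0 & \widetilde A_r^{(k-1)}\end{pmatrix} z^{(k)}(N)
\end{equation*}
for a suitable matrix $T_r$, which is block upper triangular with diagonal blocks $q^{k-1}C, q^{k-2}C, \ldots, qC, C, A_r$. Taking $\widetilde u=(u,0,\ldots,0)$ and $\widetilde w=(0,\ldots,0,w)^\top$ then recovers $\Sigma^{k}x=u\,\Sigma^{k}v=\widetilde u\, z^{(k)}$ and $z^{(k)}(0)=\widetilde w$, completing the construction of the linear representation. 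The main obstacle is merely the bookkeeping of the factors of $q$: each summation step multiplies every $s$-independent diagonal block by $q$, which is precisely what produces the geometric sequence $q^{k-1},q^{k-2},\ldots,q,1$ of diagonal coefficients, while the $s$-dependent bottom block $A_r$ is carried along unchanged.
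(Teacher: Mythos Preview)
Your proof is correct and follows essentially the same inductive strategy as the paper. Both arguments start from Lemma~\ref{lemma:summation} for the base case and isolate the new top-left diagonal block $q^{k-1}C$ at each step. The only cosmetic difference is the direction in which the induction hypothesis is invoked: the paper proves the single-row identity~\eqref{eq:sigma-v-recurrence-iterated} for $\Sigma^{m}v$ and then obtains the step $m\to m+1$ by substituting the enriched sequence $(\Sigma v, v)^\top$ (with its Lemma~\ref{lemma:summation} representation) for $v$, whereas you carry the full block vector $z^{(k-1)}$ through the induction and redo the index-splitting argument of Lemma~\ref{lemma:summation} on $\Sigma^{k-1}v$ at the top level. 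Either way, the crucial point is that the top-left diagonal block of $\widetilde A_s^{(k-1)}$ is independent of $s$, so the sum over $0\le s<q$ contributes the extra factor of $q$; you identify this explicitly, and the paper obtains it implicitly from the shape of $\widetilde C$ in Lemma~\ref{lemma:summation}.
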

\begin{proof}
  We claim that for $m\ge 1$, there
  are matrices $M_{m,0}$, \ldots, $M_{m,m-1}$ such that
  \begin{equation}\label{eq:sigma-v-recurrence-iterated}
    \Sigma^mv(qN+r) = q^{m-1}C\, \Sigma^mv(N) + \sum_{0\le j<m}M_{m,j}\, \Sigma^j v(N)
  \end{equation}
  holds for all $N\ge 0$ and $0\le r<q$.

  We show~\eqref{eq:sigma-v-recurrence-iterated} by induction on $m$. For
  $m=1$, this is~\eqref{eq:sigma-v-recurrence-simple}. To
  show~\eqref{eq:sigma-v-recurrence-iterated} for $m$ replaced by $m+1$, we
  use~\eqref{eq:sigma-v-recurrence-simple} for $v$ replaced by the regular sequence
  with associated right vector-valued sequence
  $\bigl(\begin{smallmatrix}\Sigma v\\v \end{smallmatrix}\bigr)$ studied in
  Lemma~\ref{lemma:summation} and the linear representation given there.
  We obtain
  \begin{equation*}
    \Sigma^m\begin{pmatrix}\Sigma v\\v \end{pmatrix}(qN+r)=q^{m-1}
    \begin{pmatrix}
      qC&\sum_{0\le r<q}B_r\\
      0& C
    \end{pmatrix}\, \Sigma^m\begin{pmatrix}\Sigma v\\v \end{pmatrix}(N)
    + \sum_{0\le j<m}\widetilde{M}_{m,j}\, \Sigma^j \begin{pmatrix}\Sigma v\\v \end{pmatrix}(N)
  \end{equation*}
  for suitable matrices $\widetilde{M}_{m,j}$ for $0\le j<m$. Considering the
  first block row of this equation and collecting terms by powers of $\Sigma$
  leads to~\eqref{eq:sigma-v-recurrence-iterated} with $m$ replaced by $m+1$.

  Using~\eqref{eq:sigma-v-recurrence-iterated} for $1\le m\le k$ yields the
  linear representation as described in the lemma.
\end{proof}

\begin{proof}[Proof of Theorem~\ref{theorem:k-fold-summatory-function}]
  Let $\rho$ be the joint spectral radius of $A$ and $r$ the
  spectral radius (largest absolute value of an eigenvalue) of $C$.

  For some fixed
  $k$ which will be chosen appropriately later,
  Lemma~\ref{lemma:iterated-summation} yields a linear representation of
  $\Sigma^k x$ with the properties given there. Let $\widetilde C\coloneqq
  \sum_{0\le r<q}\widetilde A_r$.

  The $k$-fold summatory function $\Sigma^k x$ admits a good asymptotic
  expansion
  if the spectral radius of $\widetilde C$ is larger than the
  joint spectral radius of $\widetilde A$. So we compute both.

  By Lemma~\ref{lemma:iterated-summation}, $\widetilde C$ is a block upper triangular matrix with
  diagonal blocks $q^k C$, \ldots, $qC$, $C$.
  So the spectral radius of
  $\widetilde C$ is $q^k r$.

  The joint spectral radius of a family of block upper
  triangular matrices is the maximum of the joint spectral radii of the
  diagonal blocks;
  see~\cite[Proposition~1.5]{Jungers:2009:joint-spectral-radius}. This implies
  that the joint spectral radius of $\widetilde A$ is
  $\max\set{q^{k-1}r, q^{k-2}r, \ldots, qr, r, \rho}=\max\set{q^{k-1}r, \rho}$.

  It is clear that $q^kr>q^{k-1}r$ so the only condition which needs to be
  satisfied is $q^kr > \rho$. Such a $k$ exists because $r>0$ (as $C$ has a
  non-zero eigenvalue).
\end{proof}

\section{Divide-and-Conquer Recurrences: Proof of
  Theorems~\ref{theorem:divide-conquer-polynomial-toll-function}
  and~\ref{theorem:constant-toll-function}}\label{section:proof-divide-conquer-polynomial-toll-function}

For the proof of Theorems~\ref{theorem:divide-conquer-polynomial-toll-function}
and \ref{theorem:constant-toll-function},
we will first consider a general regular toll function $g$ and summarise our
findings in the general case in
Remark~\ref{remark:divide-and-conquer-general-remark}.
Afterwards, we will specialise
to a polynomial toll function.

As announced in Section~\ref{section:problem-statement}, we write $x$ as the
summatory function of the forward difference of $x$, i.e.,
\begin{equation*}
  x(N)=\sum_{0\le n<N}(x(n+1)-x(n)) + x(0)
\end{equation*}
for $N\ge 0$.
Note that strictly speaking, $x(0)$ is not defined
in~\eqref{eq:divide-and-conquer}. However, we may assume that $g(1)$ and $g(0)$
are somehow defined: they are not used in~\eqref{eq:divide-and-conquer}, but
we can extend the definition of $g$ if $g(0)$ and $g(1)$ should be undefined.

In order to use Theorem~\ref{thm:asymp}, we need a linear representation of the
forward difference of $x$. A first step is the following lemma.
\begin{lemma}\label{lemma:forward-difference-generic}
  Let $x$ be a sequence satisfying~\eqref{eq:divide-and-conquer} for some toll
  function $g$ and set
  $x(0)\coloneqq 0$ and $h(n)\coloneqq x(n+1)-x(n)$ for $n\ge 0$. Then
  \begin{equation}\label{eq:h-recurrence}
    \begin{aligned}
      h(2n)&=\beta h(n) + g(2n+1)-g(2n)+d_0\delta_0(n),\\
      h(2n+1)&=\alpha h(n)+g(2n+2)-g(2n+1)+d_1\delta_0(n)
    \end{aligned}
  \end{equation}
  for $n\ge 0$ with $d_0$, $d_1$ as in Theorem~\ref{theorem:divide-conquer-polynomial-toll-function}
  and $\delta_0(n)\coloneqq \iverson{n=0}$ for $n\ge 0$.
\end{lemma}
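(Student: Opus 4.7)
The plan is a direct case analysis split according to whether $n\ge 1$ (where the recurrence~\eqref{eq:divide-and-conquer} applies to every index involved) or $n=0$ (where the recurrence does not apply and correction terms appear).

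First I would handle $n\ge 1$. For $h(2n)=x(2n+1)-x(2n)$, I substitute
\begin{align*}
x(2n+1) &= \alpha x(n)+\beta x(n+1)+g(2n+1),\\
x(2n) &= (\alpha+\beta)x(n)+g(2n),
\end{align*}
using $\lfloor (2n+1)/2\rfloor=n$, $\lceil (2n+1)/2\rceil=n+1$ and $\lfloor 2n/2\rfloor=\lceil 2n/2\rceil=n$. Subtracting gives exactly $\beta h(n)+g(2n+1)-g(2n)$. For $h(2n+1)=x(2n+2)-x(2n+1)$, note that $2n+2\ge 4$ so the recurrence also applies at $2n+2$, yielding $x(2n+2)=(\alpha+\beta)x(n+1)+g(2n+2)$; subtracting $x(2n+1)$ and grouping terms produces $\alpha h(n)+g(2n+2)-g(2n+1)$. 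In both cases $\delta_0(n)=0$ and the claimed formulas hold.

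For the base case $n=0$ I compute the two values $h(0)$ and $h(1)$ directly and then solve for the required correction so that~\eqref{eq:h-recurrence} is consistent. From $x(0)=0$ we get $h(0)=x(1)$, while the recurrence (which does not apply at $n=0$, $1$) plays no role; plugging $n=0$ into the first line of~\eqref{eq:h-recurrence} forces $d_0=(1-\beta)x(1)-g(1)+g(0)$, matching the definition in Theorem~\ref{theorem:divide-conquer-polynomial-toll-function}. For $h(1)=x(2)-x(1)$ we use $x(2)=(\alpha+\beta)x(1)+g(2)$ (which now is valid since $2\ge 2$), obtain $h(1)=(\alpha+\beta-1)x(1)+g(2)$, and plug $n=0$ into the second line of~\eqref{eq:h-recurrence}: this gives $d_1=(\beta-1)x(1)+g(1)=g(1)-(1-\beta)x(1)$, again matching the definition.

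There is no real obstacle; the only subtlety to watch is that the recurrence~\eqref{eq:divide-and-conquer} is only valid for arguments $\ge 2$, so I must verify in each subcase that every evaluation of $x$ at an index produced by halving is either in the range of validity or is handled by the base case; the resulting mismatch at $n=0$ is precisely what the Iverson correction terms $d_0\delta_0(n)$ and $d_1\delta_0(n)$ absorb.
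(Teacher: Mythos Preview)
Your proposal is correct and follows essentially the same approach as the paper: a direct computation from the divide-and-conquer recurrence, with the boundary case $n=0$ handled separately. The only cosmetic difference is that the paper first packages the $n=0$ corrections into an extended recurrence for $x(2n)$ and $x(2n+1)$ valid for all $n\ge 0$ (with $\delta_0$-terms) and then subtracts, whereas you subtract first and compute the $n=0$ corrections afterwards; the arithmetic is identical.
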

\begin{proof}
We can
rewrite~\eqref{eq:divide-and-conquer-as-recursive} as
\begin{equation}
  \label{eq:divide-and-conquer-as-recursive-no-offset}
  \begin{aligned}
    x(2n)&=(\alpha+\beta)x(n)+g(2n)-g(0)\delta_0(n),\\
    x(2n+1)&=\alpha x(n)+\beta x(n+1)+g(2n+1) +((1-\beta)x(1)-g(1))\delta_0(n)
  \end{aligned}
\end{equation}
for $n\ge 0$.

Then~\eqref{eq:h-recurrence} follows from
\begin{align*}
      h(2n)&=x(2n+1)-x(2n),\\
      h(2n+1)&=x(2n+2)-x(2n+1)
\end{align*}
and inserting~\eqref{eq:divide-and-conquer-as-recursive-no-offset} into these equations.
\end{proof}

\begin{remark}\label{remark:divide-and-conquer-general-remark}
  Note that $h$ occurs only as $h(n)$ on the right-hand side of
  of~\eqref{eq:h-recurrence}. Therefore, a right vector-valued sequence associated
  with $h$ can be constructed by using $h(n)$ in its first component, then
  whatever is needed to express $g(2n+2)-g(2n+1)$ and $g(2n+1)-g(2n)$, followed
  by $\delta_0(n)$. The matrices $A_0$ and $A_1$ of the linear representation will thus be in
  block triangular form; the upper left diagonal elements of $A_0$ and $A_1$ being $\beta$ and
  $\alpha$, respectively.

  Assume that the contributions of $g$ to the linear representation are small
  in comparison to $\alpha$ and $\beta$.
  Then   the joint spectral radius of the linear representation of $h$ will be
  $\max\set{\alpha, \beta}$, whereas the matrix $C$ as in Theorem~\ref{thm:asymp}
  will have a dominant eigenvalue $\alpha+\beta$, which is larger than the
  joint spectral radius. Thus we will have a good asymptotic expansion in this case.
\end{remark}

We now turn to the case of a polynomial toll function.

\begin{lemma}\label{lemma:h-linear-representation}
  Let $x$ and $h$ be as in Lemma~\ref{lemma:forward-difference-generic} with a
  polynomial toll function $g(n)=\sum_{i=0}^k c_in^i$ for some $k\ge 0$ and
  some constants $c_0$, \ldots, $c_k$ with $c_k\neq 0$.

  Set
  \begin{align*}
    b_{0j}&\coloneqq \sum_{i=j+1}^k\binom{i}{j}2^jc_i,&
    b_{1j}&\coloneqq \sum_{i=j+1}^k\binom{i}{j}(2^i-2^j)c_i,\\
    a_{0ij}&\coloneqq \iverson{j=i}2^i,&
    a_{1ij}&\coloneqq \binom{i}{j}2^j
  \end{align*}
  for $0\le i<k$ and $0\le j<k$ and
  \begin{align*}
    b_r&\coloneqq (b_{r(k-1)}, \ldots, b_{r0}),&
    \widetilde{A}_r&\coloneqq (a_{rij})_{\substack{i=k-1,\ldots, 0\\j=k-1,\ldots,
    0}},\\
    \mu_r&\coloneqq \begin{cases}\beta&\text{ if }r=0,\\\alpha&\text{ if }r=1,\end{cases}&
    A_r&\coloneqq \begin{pmatrix}
      \mu_r&b_r&d_r\\
      0&\widetilde{A}_r&0\\
      0&0&\iverson{r=0}
    \end{pmatrix}
  \end{align*}
  for $r\in\set{0, 1}$ and
  \begin{align*}
    u&\coloneqq (1, 0, \ldots, 0)\in\C^{1\times (k+2)},&
    w&\coloneqq
       \begin{cases}
         (x(1), 0, \ldots, 0, 1, 1)^\top\in\C^{(k+2)\times 1}&\text{ if }k\ge 1,\\
         (x(1), 1)^\top\in\C^{2\times 1}&\text{ if }k=0.
       \end{cases}
  \end{align*}
  Then $(u, A, w)$ is a linear representation for $h$.

  If $d_0=d_1=0$, then $(\widetilde u, \widetilde A, \widetilde w)$ is also a
  linear representation for $h$ where $\widetilde u$, $\widetilde A$, and
  $\widetilde w$ arise from from $u$, $A$, and $w$ by removing the last column,
  the last row and column, and the last row, respectively.
\end{lemma}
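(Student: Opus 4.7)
The plan is to exhibit an explicit right vector-valued sequence and verify directly that it satisfies the three conditions of Definition~\ref{definition:regular-sequence}. Concretely, for $k\ge 1$ I would set
\begin{equation*}
  v(n)\coloneqq(h(n),\ n^{k-1},\ n^{k-2},\ \ldots,\ n,\ 1,\ \delta_0(n))^\top,
\end{equation*}
and $v(n)\coloneqq(h(n),\delta_0(n))^\top$ for $k=0$. The equality $uv(n)=h(n)$ is immediate from $u=(1,0,\ldots,0)$, and $v(0)=w$ follows from $h(0)=x(1)-x(0)=x(1)$ (using $x(0)\coloneqq 0$) together with $\delta_0(0)=1$. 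It then remains to verify the recursion $v(2n+r)=A_r v(n)$ for $r\in\set{0,1}$ and all $n\ge 0$.

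I would check this block by block. For the middle block (the monomials), the binomial theorem gives $(2n)^i=2^i n^i$, matching the diagonal matrix $\widetilde{A}_0$, and $(2n+1)^i=\sum_{j=0}^{i} \binom{i}{j}2^j n^j$, matching $\widetilde{A}_1$. For the bottom component, $\delta_0(2n)=\delta_0(n)$ and $\delta_0(2n+1)=0$, giving the $\iverson{r=0}$ corner. For the top component, I plug Lemma~\ref{lemma:forward-difference-generic} and expand:
\begin{align*}
  g(2n+1)-g(2n)&=\sum_{i=0}^k c_i\sum_{j=0}^{i-1}\binom{i}{j}2^j n^j
  =\sum_{j=0}^{k-1}b_{0j}\,n^j,\\
  g(2n+2)-g(2n+1)&=\sum_{i=0}^k c_i\sum_{j=0}^{i-1}\binom{i}{j}(2^i-2^j)n^j
  =\sum_{j=0}^{k-1}b_{1j}\,n^j,
\end{align*}
using $(2n+2)^i=2^i(n+1)^i$ in the second line. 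Comparing coefficients, the top row of $A_r$ reads exactly $h(2n+r)=\mu_r h(n)+\sum_{j=0}^{k-1}b_{rj}n^j+d_r\delta_0(n)$, which is the recurrence from Lemma~\ref{lemma:forward-difference-generic}. This handles the recurrence for all $n\ge 0$ simultaneously; the $\delta_0(n)$ component is exactly what absorbs the boundary corrections at $n=0$.

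For the final assertion, note that if $d_0=d_1=0$, the correction term $\delta_0(n)$ never enters the recurrence for $h$, so $h$ itself (without the appended $\delta_0$ component) satisfies $v'(2n+r)=A'_r v'(n)$ for all $n\ge 0$, where $v'$, $A'_r$ are obtained from $v$, $A_r$ by dropping the last coordinate. One still has $u'v'(n)=h(n)$ and $v'(0)=w'$, completing the argument. The main obstacle is purely bookkeeping: aligning the reversed index order ($j=k-1,\ldots,0$) used in the definitions of $b_r$ and $\widetilde{A}_r$ with the natural order coming out of the binomial expansions, and making sure the boundary case $n=0$ is covered by the $d_r\delta_0(n)$ terms rather than silently assumed.
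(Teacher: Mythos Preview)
Your proposal is correct and follows essentially the same approach as the paper: both choose the right vector-valued sequence $v(n)=(h(n), n^{k-1},\ldots,1,\delta_0(n))^\top$, verify $uv(n)=h(n)$ and $v(0)=w$, expand $g(2n+1)-g(2n)$, $g(2n+2)-g(2n+1)$, and the monomials $(2n+r)^i$ via the binomial theorem to identify the rows of $A_r$, and then observe that for $d_0=d_1=0$ the last component decouples. The only cosmetic difference is that the paper phrases the $d_0=d_1=0$ case as ``$A_0$ and $A_1$ become block diagonal, so the lower right block is not seen by $u$,'' while you phrase it as ``drop the last coordinate''; the content is identical.
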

We remark that $A_r$ is an upper triangular matrix for $r\in\set{0, 1}$ because
$\binom ij=0$ for $j>i$ (and indices in $\widetilde A_0$ and $\widetilde A_1$ are decreasing).
\begin{proof}
  As a
  right vector-valued sequence $v$, we choose
  \begin{equation*}
    n \mapsto (h(n), n^{k-1}, \ldots, 1, \delta_0(n))^{\top}.
  \end{equation*}
  We immediately check that $v(0)=w$ and that $uv(n)=h(n)$ holds for all $n\ge 0$.

  Using the binomial theorem repeatedly, we get
  \begin{alignat*}{3}
    g(2n+1)-g(2n)&=\sum_{i=0}^kc_i\sum_{j=0}^{i-1}\binom{i}{j}2^jn^j&&=\sum_{j=0}^{k-1}b_{0j}n^j,\\
    g(2n+2)-g(2n+1)&=\sum_{i=0}^kc_i\sum_{j=0}^{i-1}\binom{i}{j}2^jn^j(2^{i-j}-1)&&=\sum_{j=0}^{k-1}b_{1j}n^j,\\
    (2n)^i&=2^in^i&&=\sum_{j=0}^{k-1}a_{0ij}n^j &\qquad &\text{for } 0\le i< k,\\
    (2n+1)^i&=\sum_{j=0}^{i}\binom{i}{j}2^{j}n^j&&=\sum_{j=0}^{k-1}a_{1ij}n^j &\qquad &\text{for } 0\le i< k.
  \end{alignat*}

  We verify that~\eqref{eq:h-recurrence} translates into
  \begin{equation*}
    v(2n+r)=A_rv(n)
  \end{equation*}
  for $r\in\set{0, 1}$ and $n\ge 0$.

  If $d_0=d_1=0$, then $A_0$ and $A_1$ are block diagonal matrices. The lower
  right block is not taken into account when multiplying by $u$, so the lower
  right block can be omitted.

  Thus the result follows.
\end{proof}

\begin{lemma}\label{lemma:h-spectrum-joint-spectral-radius}
  Let $x$ and $h$ be as in Lemma~\ref{lemma:forward-difference-generic} and
  $g$, $(u, A, w)$ be as in Lemma~\ref{lemma:h-linear-representation}. Assume
  that $k\ge 1$. Set $C=A_0+A_1$.

  Then $\rho(A)=\max\set{\alpha, \beta, 2^{k-1}}$ and
  $\sigma(C)=\set{\alpha+\beta, 2^k, 2^{k-1}, \ldots, 2, 1}$. If $\max\set{\alpha,
  \beta}\neq 2^{k-1}$, then $A$ has the simple growth property.
\end{lemma}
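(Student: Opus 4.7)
My plan is to exploit the block upper triangular structure of $A_0$ and $A_1$, which decompose into diagonal blocks of sizes $1$, $k$, $1$ corresponding to the scalars $\mu_r$, the matrix $\widetilde A_r$, and the scalar $\iverson{r=0}$. All three assertions will follow from this structural analysis. For $\sigma(C)$: the sum $C = A_0 + A_1$ is upper triangular, so its spectrum equals the multiset of its diagonal entries. The top block contributes $\alpha+\beta$; the diagonal of $\widetilde A_0 + \widetilde A_1$ consists of $a_{0ii}+a_{1ii} = 2^{i+1}$ for $i = k-1, \ldots, 0$, giving $2^k, 2^{k-1}, \ldots, 2$; and the bottom block contributes $\iverson{0=0}+\iverson{1=0} = 1$. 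Reading these off yields the claimed set.

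For $\rho(A)$: I apply \cite[Proposition~1.5]{Jungers:2009:joint-spectral-radius}, which gives that the joint spectral radius of a family of block upper triangular matrices (with a common block structure) is the maximum over the joint spectral radii of the diagonal-block subfamilies. The top $1 \times 1$ family $\set{\beta, \alpha}$ has JSR $\max\set{\alpha,\beta}$; the bottom $1 \times 1$ family $\set{1,0}$ has JSR $1$; and the middle $k \times k$ family $\set{\widetilde A_0, \widetilde A_1}$ consists of upper triangular matrices sharing the diagonal $2^{k-1}, 2^{k-2}, \ldots, 1$. Iterating Jungers' proposition on the further $1 \times 1$ block decomposition of each $\widetilde A_r$ shows that the middle JSR equals the maximum common diagonal entry, namely $2^{k-1}$. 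Taking the overall maximum and using $k\ge 1$ gives $\rho(A) = \max\set{\alpha,\beta,2^{k-1}}$.

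For the simple growth property under $\max\set{\alpha,\beta} \neq 2^{k-1}$, I will estimate the blocks of a generic product $A_{r_1}\cdots A_{r_n}$ explicitly. The diagonal blocks are bounded by the corresponding JSR raised to the $n$-th power. An off-diagonal block, for instance the top-middle, expands as $\sum_{i=1}^n \mu_{r_1}\cdots \mu_{r_{i-1}}\, b_{r_i}\, \widetilde A_{r_{i+1}}\cdots \widetilde A_{r_n}$, a sum of $n$ terms whose individual norms are bounded by a constant times $(\max\set{\alpha,\beta})^{i-1}(2^{k-1})^{n-i}$. When $\max\set{\alpha,\beta}$ and $2^{k-1}$ are distinct, this geometric sum is $O(\rho(A)^n)$; when they are equal, an extra factor of $n$ appears. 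The same comparison handles the remaining off-diagonal blocks. Simple growth of the middle block itself follows by induction on $k$: the pairwise distinct diagonal entries $2^{k-1} > 2^{k-2} > \cdots > 1$ guarantee the strict inequality at every recursive splitting step, with the trivial base case $k=1$. The main obstacle I anticipate is presenting the off-diagonal estimates cleanly while making transparent that the hypothesis $\max\set{\alpha,\beta}\neq 2^{k-1}$ is precisely what prevents an extra polynomial factor from emerging in the geometric sums.
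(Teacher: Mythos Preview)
Your treatment of $\sigma(C)$ and $\rho(A)$ coincides with the paper's: both read the upper-triangular diagonals directly and invoke \cite[Proposition~1.5]{Jungers:2009:joint-spectral-radius}. The difference lies in the simple growth property. The paper dispatches this in one line by citing \cite[Lemma~4.5]{Heuberger-Krenn-Lipnik:2021:asymp-analy-recur-sequen}, which guarantees simple growth for upper-triangular families whenever the joint spectral radius is attained only once among the positionwise maxima of the diagonals of $A_0$ and $A_1$; here those maxima are $\max\set{\alpha,\beta}, 2^{k-1}, \ldots, 1, 1$, and the hypothesis $\max\set{\alpha,\beta}\neq 2^{k-1}$ isolates the largest entry. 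Your route is instead a hands-on estimate of every block of $A_{r_1}\cdots A_{r_n}$ via geometric sums, together with an induction on $k$ for the middle block---in effect a self-contained reproof of the needed special case of that lemma. The paper's approach buys brevity; yours buys transparency about precisely where the hypothesis enters and removes the external dependence. One detail worth making explicit in your write-up: for the top-right $(1,3)$ block the relevant comparison is between $\max\set{\alpha,\beta}$ and the bottom scalar family's radius~$1$ rather than $2^{k-1}$; the bound still goes through because $\max\set{\alpha,\beta}=1$ would force $2^{k-1}\neq 1$ under the hypothesis, whence $\rho(A)\ge 2$ absorbs any stray factor of~$n$.
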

\begin{proof}
  By Lemma~\ref{lemma:h-linear-representation}, $A_0$ is an upper triangular
  matrix with diagonal elements $\beta$, $2^{k-1}$, \dots, $1$, $1$ and $A_1$
  is an upper triangular matrix with diagonal elements $\alpha$, $2^{k-1}$,
  \dots, $1$, $0$.

  The joint spectral radius of a set of upper triangular matrices is the
  maximum of the diagonal elements of the matrices; see
  \cite[Proposition~1.5]{Jungers:2009:joint-spectral-radius}. This implies that
  $\rho(A)=\max\set{\alpha, \beta, 2^{k-1}}$.
  By~\cite[Lemma~4.5]{Heuberger-Krenn-Lipnik:2021:asymp-analy-recur-sequen},
  $A$ has the simple growth property if the joint spectral radius of $A$ occurs
  only once as a maximum of corresponding diagonal elements of $A_0$ and
  $A_1$. This is the case if $\max\set{\alpha, \beta}\neq 2^{k-1}$.

  We also conclude that $C$ is an upper triangular matrix with diagonal elements
  $\alpha+\beta$, $2^k$, $2^{k-1}$, \dots, $2$, $1$. Thus the assertion for
  $\sigma(C)$ follows.
\end{proof}

\begin{proof}[Proof of
  Theorem~\ref{theorem:divide-conquer-polynomial-toll-function}]
  To prove Theorem~\ref{theorem:divide-conquer-polynomial-toll-function} using
  Theorem~\ref{thm:asymp}, we need to determine the eigenvalues of $C$ which
  are greater or equal than the joint spectral radius of $A_0$ and $A_1$ (with the
  notations of Lemma~\ref{lemma:h-spectrum-joint-spectral-radius}) and the size
  of the largest Jordan block associated with any such eigenvalue.

  As $\rho(A)=\max\set{\alpha, \beta, 2^{k-1}}\ge 2^{k-1}\ge 1$, it is
  clear that the only relevant eigenvalues of $C$ are contained in the set
  $\set{\alpha+\beta, 2^k, 2^{k-1}}$.

  The main case distinction of
  Theorem~\ref{theorem:divide-conquer-polynomial-toll-function} concerns the
  order of $\alpha+\beta$, $2^k$, and $2^{k-1}$.

  \begin{itemize}
  \item \textbf{Case 1:} $\alpha+\beta>2^k$. This implies that
    $\max\set{\alpha, \beta}>2^{k-1}$ and therefore $\rho(A)
    =\max\set{\alpha, \beta}$ and $A$ has the simple growth property. The eigenvalue $\alpha+\beta$ of $C$ is
    larger than the joint spectral radius and is a simple
    eigenvalue. The eigenvalue $2^k$ is also a simple eigenvalue. If it is
    larger than the joint spectral radius, we are in Case~1a and have two
    asymptotic terms larger than the error term. If $2^k\le \max\set{\alpha,
    \beta}$, we are in Case~1b and have one asymptotic term larger than the
    error term; there is a logarithmic factor in the error term if and only if
    $2^k$ equals the joint spectral radius.
  \item \textbf{Case 2:} $\alpha+\beta=2^k$. In this case,
    $2^k=\alpha+\beta$ has algebraic multiplicity $2$ as an eigenvalue of $C$. We
    note that $C-2^kI$ has the shape
    \begin{equation*}
      C - 2^kI =
      \begin{pmatrix}
        2^k-2^k & \binom{k}{k-1}2^kc_k & b' \\
        0 & 2^k-2^k & b'' \\
        0 & 0 & C'
      \end{pmatrix}=
      \begin{pmatrix}
        0 & k2^kc_k & b' \\
        0 & 0 & b'' \\
        0 & 0 & C'
      \end{pmatrix}
    \end{equation*}
    for some vectors $b'$, $b''$ and some upper triangular matrix $C'$ with
    diagonal elements $2^{k-1}-2^k$, $2^{k-2}-2^k$, \dots, $2^{0}-2^k$. As $c_k\neq 0$ by
    assumption, the kernel of $C-2^kI$ has dimension $1$. We conclude that
    the size
    $m_C(2^k)$ of the largest Jordan block of $C$ associated with the eigenvalue 
    $2^k$ equals $2$. 
    So we have a logarithmic factor in the asymptotic main term.

    We also note that $\max\set{\alpha, \beta}\ge 2^{k-1}$ holds in this case
    with equality if and only if $\alpha=\beta=2^{k-1}$. So
    the joint spectral radius $\rho(A)$ equals $\max\set{\alpha,
    \beta}$ and $A$ has the simple growth property unless $\alpha=\beta=2^{k-1}$.

  \item \textbf{Case 3:} $2^k>\alpha+\beta>2^{k-1}$. In this case, we have
    $\max\set{\alpha, \beta}>2^{k-2}$ and $C$ has two simple dominant eigenvalues
    $2^k$ and $\alpha+\beta$. We do not have additional information about the
    joint spectral radius. If $\max\set{\alpha,\beta}\neq 2^{k-1}$, then $A$ has
    the simple growth property. If $\max\set{\alpha,\beta}< 2^{k-1}$, then
    the joint spectral radius of $A$ is $2^{k-1}$. As $C$ has $2^{k-1}$ as an eigenvalue as well,
    there is a logarithmic factor in
    the error term in exactly this situation.
  \item \textbf{Case 4:} $2^{k-1}\ge \alpha+\beta$. In this case, we have
    $\max\set{\alpha, \beta}<2^{k-1}$, so $\rho(A)=2^{k-1}$ and $A$ has the
    simple growth property. There is
    only one eigenvalue of $C$ larger than this joint spectral radius, namely
    $2^k$. We have to determine $m_C(2^{k-1})$ in order to find out the
    exponent of $\log n$ in the error term. The algebraic multiplicity of
    $2^{k-1}$ as an eigenvalue of $C$ equals
    $1+\iverson{\alpha+\beta=2^{k-1}}$. So if $\alpha+\beta<2^{k-1}$, we
    have $m_C(2^{k-1})=1$ and a factor $\log n$ in the error term.

    We now consider the case $\alpha+\beta=2^{k-1}$ and $k\ge 2$. We note that $C-2^{k-1}I$
    has the shape
    \begin{align*}
      C - 2^{k-1}I &=
      \begin{pmatrix}
        2^{k-1}-2^{k-1} & \binom{k}{k-1}2^{k}c_k & \binom{k-1}{k-2}2^{k-1}c_{k-1} + \binom{k}{k-2}2^{k}c_k  &b' \\
        0 & 2^k-2^{k-1} & \binom{k-1}{k-2}2^{k-2} & b'' \\
        0 & 0 & 2^{k-1}-2^{k-1}& b'''\\
        0 & 0 & 0& C'
      \end{pmatrix}\\
      &=
      \begin{pmatrix}
        0 & k2^{k}c_k & (k-1)2^{k-1}c_{k-1} + k(k-1)2^{k-1}c_k  &b' \\
        0 & 2^{k-1} & (k-1)2^{k-2} & b'' \\
        0 & 0 & 0& b'''\\
        0 & 0 & 0& C'
      \end{pmatrix}
    \end{align*}
    for suitable vectors $b'$, $b''$, $b'''$ and a regular upper triangular
    matrix $C'$. Subtracting $2kc_k$ times the second row from the first row
    does not change the kernel, so we get
    \begin{equation*}
      \ker(C-2^{k-1}I)=\ker 
      \begin{pmatrix}
        0 & 0 & (k-1)2^{k-1}c_{k-1}  &b'-2kc_kb'' \\
        0 & 2^{k-1} & (k-1)2^{k-2} & b'' \\
        0 & 0 & 0& b'''\\
        0 & 0 & 0& C'
      \end{pmatrix}.
    \end{equation*}
    We conclude that $\dim \ker (C-2^{k-1}I)=1+\iverson{c_{k-1}=0}$ and
    therefore $m_C(2^{k-1})=1+\iverson{c_{k-1}\neq 0}$.

    Finally we turn to the case that $\alpha+\beta=2^{k-1}$ and $k=1$. Then we have
    \begin{equation*}
      C-I=
      \begin{pmatrix}
        0 &2c_1&d_0+d_1\\
        0 & 1 & 0 \\
        0 & 0 & 0
      \end{pmatrix}.
    \end{equation*}
    If $d_0=d_1=0$, by the last statement of Lemma~\ref{lemma:h-linear-representation},
    the last row and column of $C-I$ are omitted and $1$
    has algebraic multiplicity $1$ as an eigenvalue of $C$.
    We conclude that $\dim \ker C-I = 1+\iverson{d_0+d_1=0}-\iverson{d_0=0}\iverson{d_1=0}$ and therefore
    $m_C(2^{k-1})=1+\iverson{d_0+d_1\neq 0}$.

    So, to summarise, $m_C(2^{k-1})= 1 + E$ where $E$ is defined in
    Theorem~\ref{theorem:divide-conquer-polynomial-toll-function}.
  \end{itemize}
  
\end{proof}

\begin{proof}[Proof of Theorem~\ref{theorem:constant-toll-function}]
  If $d_0=d_1=0$, then Lemma~\ref{lemma:h-linear-representation} yields
  $A_0=(\beta)$, $A_1=(\alpha)$, and $C=(\alpha+\beta)$, so the joint spectral
  radius of $A$ equals $\max\set{\alpha, \beta}$ which is strictly
  less than the unique eigenvalue $\alpha+\beta$ of $C$. As there is no
  eigenvalue of $C$ less than the joint spectral radius of $A$, there is no
  error term.
  The result follows in
  this case.

  From now on, we assume that $d_0\neq 0$ or $d_1\neq
  0$. Lemma~\ref{lemma:h-linear-representation} yields
  \begin{align*}
    A_0 &= \begin{pmatrix}\beta&d_0\\0&1\end{pmatrix},&
    A_1 &= \begin{pmatrix}\alpha&d_1\\0&0\end{pmatrix},&
    C &= \begin{pmatrix}\alpha+\beta&d_0+d_1\\0&1\end{pmatrix}.
  \end{align*}
  We see that the joint spectral radius of $A$ is
  $\max\set{\alpha,\beta, 1}$ and that $C$ has eigenvalues $\alpha+\beta$ and $1$.
  It is now easy to deduce the assertions of the theorem.
\end{proof}

\section{Details on Example~\ref{example:find-minimum-maximum}}\label{section:details-example}
We proceed as outlined in Remark~\ref{remark:divide-and-conquer-general-remark}.
Setting $x(0)=0$ as usual, we have
\begin{equation*}
  x(n)=x(\floor{n/2})+x(\ceiling{n/2})+2 - \iverson{n=2} -2\iverson{n=1}-2\iverson{n=0}
\end{equation*}
for $n\ge 0$. Equivalently, we have
\begin{align*}
  x(2n)&=2x(n)+2 - \iverson{n=1} - 2\iverson{n=0},\\
  x(2n+1)&=x(n)+x(n+1) +2 -2\iverson{n=0}
\end{align*}
for $n\ge 0$.
Setting $h(n)\coloneqq x(n+1)-x(n)$ leads to
\begin{align*}
  h(2n)&=h(n)+\iverson{n=1},\\
  h(2n+1)&=h(n)+\iverson{n=0}
\end{align*}
for $n\ge 0$.
This defines a $2$-regular sequence with a linear representation $(u, A, w)$
with associated right vector-valued sequence $v$ defined by $v(n)=(h(n),
\delta_1(n), \delta_0(n))$ with
\begin{align*}
  A_0&=
       \begin{pmatrix}
         1&1&0\\
         0&0&0\\
         0&0&1
       \end{pmatrix},
            &
              A_1&=\begin{pmatrix}
                1&0&1\\
                0&0&1\\
                0&0&0
              \end{pmatrix},\\
  u&=(1, 0, 0),&
                 w&=(0, 0, 1)^\top.
\end{align*}
Here, $\delta_1$ is defined by $\delta_1(n)\coloneqq \iverson{n=1}$ for $n\ge 0$.

We can now use SageMath\footnote{The code for this example
  is available
  at~\url{https://arxiv.org/src/2403.06589/anc}; it uses the
  code accompanying~\cite{Heuberger-Krenn:2018:asy-regular-sequences}
  which is available at
  at~\url{https://gitlab.com/dakrenn/regular-sequence-fluctuations}.}
to compute Fourier coefficients and to produce Figure~\ref{fig:minmax_example}.

\bibliography{bib/cheub}
\end{document}